\numberwithin{equation}{section}
\newtheorem{remark}{Remark}[section]
\newtheorem{definition}{Definition}[section]
\newtheorem{lemma}{Lemma}[section]
\newtheorem{theorem}{Theorem}[section]
\def\RR{\mathbb R}
\def\EE{\mathcal E}
\def\TE{\tilde{\mathcal E}}
\def\BE{\mathbb E}
\def\SS{\mathbb S}
\def\OV{\overline V}
\def\OG{\widehat\Gamma}
\def\argmin{{\rm arg}\!\min}
\def\be{\begin{equation}}
\def\ee{\end{equation}}
\def\bea{\begin{eqnarray}}
\def\eea{\end{eqnarray}}
\newtheorem{assu}{Assumption}[section]
\newtheorem{example}{Example}[section]
\newcommand{\mc}[1]{\mathcal{#1}}
\newcommand{\mb}[1]{\mathbf{#1}}
\newcommand{\la}{\langle}
\newcommand{\ra}{\rangle}
\title{Consensus-Based Optimization on Hypersurfaces: Well-Posedness and Mean-Field Limit}
\author{Massimo Fornasier \footnote{Department of Mathematics, Technical University of Munich, Boltzmannstraße 3, 85748 Garching (Munich), Germany
(massimo.fornasier{@}ma.tum.de).}\qquad
Hui Huang \footnote{Department of Mathematics, Technical University of Munich, Boltzmannstraße 3, 85748 Garching (Munich), Germany
	(hui.huang{@}tum.de).}\qquad
Lorenzo Pareschi\footnote{Department of Mathematics \& Computer Science, University of Ferrara, Via Machiavelli 30, Ferrara, 44121, Italy (lorenzo.pareschi{@}unife.it).}\qquad
Philippe S\"{u}nnen \footnote{Department of Mathematics, Technical University of Munich, Boltzmannstraße 3, 85748 Garching (Munich), Germany
	(philippe.suennen@ma.tum.de).}
}
\begin{document}
\maketitle
\begin{abstract}
We introduce a new stochastic  differential  model for global optimization of nonconvex functions on  compact  hypersurfaces.  The model  is inspired by the stochastic Kuramoto-Vicsek system and  belongs to the class of Consensus-Based Optimization methods. In fact, particles move on the   hypersurface  driven by a drift towards an instantaneous consensus point, computed as a convex combination of the particle locations weighted by the cost function according to Laplace's principle. The consensus point represents an approximation to a global minimizer. The dynamics is further perturbed by a random vector field to favor exploration, whose variance is a function of the distance of the particles to the consensus point. In particular, as soon as the consensus is reached, then the stochastic component vanishes. In this paper, we study the well-posedness of the model and we derive rigorously its mean-field approximation for large particle limit.
\end{abstract}

{\bf Keywords}: Consensus-based optimization,  optimization over manifolds, stochastic Kuramoto-Vicsek model, well-posedness, mean-field limit

\section{Introduction}

Over the last decades, large systems of interacting particles (or agents)  are widely used to investigate self-organization and collective behavior. They frequently appear in modeling phenomena such as biological swarms \cite{carrillo2010asymptotic,cucker2007emergent}, crowd  dynamics \cite{albiandco2019survey,bellomo2011modeling,bellomo2012modeling}, self-assembly of nanoparticles \cite{holm2006formation} and opinion formation \cite{albiandco2017opinion,motsch2014heterophilious,helbing2010quantitative}. Similar particle models are also used in \textit{metaheuristics} 
\cite{Aarts:1989:SAB:61990,Back:1997:HEC:548530,Blum:2003:MCO:937503.937505,Gendreau:2010:HM:1941310}, which provide empirically robust solutions to tackle hard optimization problems with fast algorithms. Metaheuristics are  methods that orchestrate an interaction between local improvement procedures and global/high level strategies, and combine random and deterministic decisions,
to create a process capable of escaping from local optima and performing a robust search of a solution space.
Starting with the groundbreaking work Ref. \cite{rastrigin63} of Rastrigin on Random Search in 1963, 
numerous mechanisms for multi-agent global optimization have been  considered, among the most prominent instances we recall the Simplex Heuristics \cite{NeldMead65}, Evolutionary Programming \cite{Fogel:2006:ECT:1202305},  the  Metropolis-Hastings sampling algorithm \cite{hastings70}, Genetic Algorithms \cite{Holland:1992:ANA:531075}, Particle Swarm Optimization (PSO) \cite{kennedy2010particle,poli2007particle}, Ant Colony Optimization (ACO) \cite{dorigo2005ant} and Simulated Annealing (SA) \cite{holley1988simulated,kirkpatrick1983optimization}. 
Despite the tremendous empirical success of these techniques, most  metaheuristical  methods  still lack  proper mathematical proof of robust convergence to global minimizers.  In fact, due to the random component of metaheuristics, their asymptotic analysis would require to discern the stochastic dependencies, which is a daunting task, especially for those methods that combine instantaneous decisions with memory mechanisms. 

Recent work  Refs. \cite{pinnau2017consensus,carrillo2018analytical} on Consensus-Based Optimization (CBO) focuses on instantaneous stochastic and deterministic decisions in order to establish a consensus among particles on the location of the global minimizers within a domain.
In view of the instantaneous nature of the dynamics, the evolution of the system can be interpreted as a system of  first order stochastic differential equations, whose large particle limit is approximated by a deterministic partial differential equation of mean-field type. The large time behavior of such a deterministic PDE can be analyzed by classical techniques of large deviation bounds and the global convergence of the mean-field model can be mathematically proven in a rigorous way for a large class of optimization problems. Certainly CBO is a significantly simpler mechanism with respect to more sophisticated metaheuristics, which can include different features including memory of past exploration. Nevertheless, it seems to be powerful and robust enough to tackle many interesting nonconvex optimizations \cite{carrillo2019consensus,fhps20-2,ha2019convergence}, which would be harder to solve by gradient descent methods that have been dominating the field of optimization, especially in relevant applications such as machine learning. In fact in many of these problems the objective function is not differentiable and CBO do not use any gradient information for their exploration. Moreover, in certain models, such as training of deep neural networks, the gradient tends to explode or vanish \cite{bengio1994learning}. Finally, although there exist situations where global optimization by gradient descent methods can be ensured under {\it ad hoc} conditions, see, e.g. Refs. \cite{Chen2019,liu2019bad,recht19}, they do not offer in general guarantees of global convergence. Instead, CBO has potential of a rather general and rigorous global asymptotic/convergence analysis.
Some theoretical gaps remain open in the analysis of CBO though, in particular the rigorous derivation of the mean-field limit, due to the difficulty in establishing bounds on the moments of the probability distribution of the particles \cite{carrillo2018analytical}.  Because of this lack of compactness, a direct proof of convergence of the stochastic particle method has been recently derived in Ref. \cite{ha2019convergence}, which does not require the mean-field limit, but at the same time  it is not  able to provide a quantitative error estimate with respect to the number of particles.

Motivated by these theoretical gaps and several potential applications in machine learning, the main task of the present work is to develop a CBO approach to solve the following constrained optimization problem
\begin{equation}
v^\ast \in \argmin\limits_{v\in \Gamma}\EE(v)\,,
\end{equation}
where $0\leq \EE:\mathbb R^{d} \to \mathbb R$ is a given continuous cost function, which we wish to minimize over a hypersurface $\Gamma$. Here we assume that $\Gamma$ is a connected smooth compact hypersurface embedded in $\RR^d$, which is represented as the $0$-level set of a signed distance function $\gamma$ with $|\gamma(v)|=\mbox{dist}(v,\Gamma)$. This means that
\begin{equation*}
\Gamma=\left\{v\in \RR^d|~\gamma(v)=0\right\}\,.
\end{equation*}
If $\partial \Gamma=\emptyset$, we also assume for simplicity that $\gamma<0$ on the interior of $\Gamma$ and $\gamma>0$ on the exterior. The gradient $\nabla\gamma$ is then the outward unit normal on $\Gamma$ wherever $\gamma$ is defined. Moreover, we assume that there exists an open neighborhood $\widehat\Gamma$ of $\Gamma$ such that $\gamma\in \mathcal{C}^3(\widehat\Gamma)$.  Such setting of hypersurface $\Gamma$ has been used in Ref. \cite{demlow2007adaptive}. 
\begin{example}
	Examples of hypersurfaces $\Gamma$ in this setting are 
	\begin{itemize}
		\item the unit sphere $\SS^{d-1}$, in which case $\gamma(v)=|v|-1$, $\nabla \gamma(v)=\frac{v}{|v|}$ and $\Delta\gamma(v)=\frac{d-1}{|v|}$;
		\item a  torus radially symmetric about the $v^d$-axis  and of inner radius $r>0$ and  external radius $R>0$ that is expressed in Cartesian coordinates as the $0$-level set of the signed distance function $\gamma(v)= \sqrt{(\sqrt{|v|^2-(v^d)^2}-R)^2+(v^d)^2}-r$, where $v=(v^1,\dots,v^d)$.
	\end{itemize}
\end{example}

In particular, we consider a system of $N$ interacting particles  $((V_t^i)_{t\geq 0 })_{i=1,\dots,N}$ satisfying  the following stochastic  differential  dynamics expressed in It\^{o}'s form
\begin{align} \label{stochastic Kuramoto-Vicsek}
dV_t^i &= -\lambda P(V_t^i)(V_t^i-v_{\alpha,\EE}(\rho_t^N))dt + \sigma |V_t^i - v_{\alpha,\EE}(\rho_t^N)| P(V_t^i)dB_t^i-\frac{\sigma^2}{2}(V_t^i-v_{\alpha,\EE}(\rho_t^N))^2\Delta\gamma(V_t^i)\nabla\gamma(V_t^i)dt\,,
\end{align}
where $\lambda>0$ is a suitable drift parameter, $\sigma>0$ a diffusion parameter,
\begin{equation}
\rho_t^N=\frac{1}{N}\sum_{i=1}^{N}\delta_{V_t^i}\,,
\end{equation}
is the empirical measure of the particles ($\delta_v$ is the Dirac measure at $v\in\RR^d$), and
\begin{equation}\label{ValphaE}
v_{\alpha,\EE}(\rho_t^N)= \frac{\sum_{j=1}^{N} V^j_t e^{-\alpha \EE( V^j_t)}}{\sum_{j=1}^{N}e^{-\alpha \EE( V^j_t )}}=\frac{\int_{\mathbb R^{d}}v\omega_\alpha^\EE(v)d\rho_t^N}{\int_{\mathbb R^{d}}\omega_\alpha^\EE(v)d\rho_t^N} \quad \mbox{ with }\quad  \omega_\alpha^\EE(v):=e^{-\alpha\EE(v)}\,.
\end{equation}
For simplicity,  we write $v^2$ to mean $|v|^2$ for any vector $v \in \mathbb R^d$,  and $|v|$ is the standard Euclidean norm.
This stochastic system is considered complemented with independent and identical distributed (i.i.d.) initial data $V_0^i\in\Gamma$ with $i=1,\cdots,N$, and the common law is denoted by $\rho_0\in \mc{P}(\Gamma)$. The trajectories $((B_t^i)_{t\geq0})_{i=1,\dots N}$ denote $N$ independent standard Brownian motions in $\RR^d$.
In \eqref{stochastic Kuramoto-Vicsek} the projection operator $P(\cdot)$ is defined by
\begin{equation}\label{defP}
P(v)=I-\nabla\gamma(v)\nabla\gamma(v)^T\,.
\end{equation}
In the case of sphere we know $\nabla\gamma(v)=\frac{v}{|v|}$, so one has $P(v)=I-\frac{vv^T}{|v|^2}$,  and \eqref{stochastic Kuramoto-Vicsek} corresponds to a stochastic Kuramoto-Vicsek type model \cite{kuramoto1975self,Vicsek1995NovelTO,bolley2012mean}.

The choice of the weight function $\omega_\alpha^\EE$ in \eqref{ValphaE} comes from  the  well-known Laplace's principle \cite{miller2006applied,Dembo2010,pinnau2017consensus}, a classical asymptotic method for integrals, which states that for any probability measure $\rho\in\mc{P}( \Gamma )$, it holds\footnote{
	For the sake of the reader we provide a simple argument for the validity of this well-known formula: let us take the exponential on the right-hand-side limit to obtain $\lim\limits_{\alpha\to\infty} \frac{1}{\left(\int_{ \Gamma }e^{-\alpha\EE(v)}d\rho(v)\right)^{1/\alpha}} =({\rm ess}\sup_{v \in  {\rm supp }(\rho)} e^{-\EE(v)})^{-1} = {\rm ess}\inf_{v \in  {\rm supp }(\rho)} e^{\EE(v)} $. The first equality holds by the well-known limit of $L^\alpha$-norms to the $L^\infty$-norm for which $\lim_{\alpha\to \infty} \| e^{-\EE(\cdot)}\|_\alpha =  \| e^{-\EE(\cdot)}\|_\infty$. By applying now the logarithm and using its continuity, one obtains \eqref{lap_princ}.}
\begin{equation}\label{lap_princ}
\lim\limits_{\alpha\to\infty}\left(-\frac{1}{\alpha}\log\left(\int_{ \Gamma }e^{-\alpha\EE(v)}d\rho(v)\right)\right)=\inf\limits_{v \in \rm{supp }(\rho)} \EE(v)\,.
\end{equation}

Let us discuss the mechanism of the dynamics. The right-hand-side of the equation \eqref{stochastic Kuramoto-Vicsek} is made of three terms. The first deterministic term $-\lambda P(V_t^i)(V_t^i-v_{\alpha,\EE}(\rho_t^N))dt $ imposes a drift to the dynamics towards $v_{\alpha,\EE}$, which is the current consensus point at time $t$ as approximation to the global minimizer. 
This drift term is the projection onto the tangent space to the hypersurfaces at $V_t^i$ of the vector $v_{\alpha,\mathcal{E}}-V_t^i$. Hence $V_t^i$ gets drifted in the direction of $v_{\alpha,\mathcal{E}}$, which is not normal to the hypersurface, and the term disappears when $V_t^i=v_{\alpha,\mathcal{E}}$. In fact, the consensus point $v_{\alpha,\EE}$ is explicitly computed as in \eqref{ValphaE} and it may lay in general outside $\Gamma$. This choice of an embedded weighted barycenter is  very simple, compatible with fast computations, and, for compact manifolds $\Gamma$, it is a good proxy for a minimizer $v^*$. One could alternatively consider the computation of a weighted barycenter on the manifold
$$
v_{\alpha,\mathcal E}^\Gamma(\rho_t) = \arg \min_{v \in \Gamma} \int_\Gamma d_\Gamma(v,w)^p e^{-\alpha \mathcal E(w)} d\rho_t(w),
$$
where $d_\Gamma$ is a (Riemannian) distance on $\Gamma$, $p\geq 1$, and $\rho_t$ is the particle distribution. However, the computation of $v_{\alpha,\mathcal E}^\Gamma$ is in general not explicit (because the distance function might not be simple or explicit) and one may have to solve at each time $t$ an optimization problem over the manifold in order to compute $v_{\alpha,\mathcal E}^\Gamma$. In view of the potentially variable curvature of $\Gamma$, the distance $v \to d_\Gamma(v,w)$ could locally behave in general as a geodesically convex or nonconvex function, rendering such a global optimization rather difficult in general. These are reasons for choosing the embedded alternative \eqref{ValphaE}.
The second stochastic term $\sigma |V_t^i - v_{\alpha,\EE}(\rho_t^N)| P(V_t^i)dB_t^i$ introduces a random decision to favor exploration, whose variance is a function of the distance of the particles to the current consensus points. In particular, as soon as consensus is reached, then the stochastic component vanishes. The last term  $-\frac{\sigma^2}{2}(V_t^i-v_{\alpha,\EE}(\rho_t^N))^2\Delta\gamma(V_t^i)\nabla\gamma(V_t^i)dt$  combined with $P(\cdot)$ is needed to ensure that the dynamics stays on the hypersurface despite the Brownian motion component.   Intuitively, it simply compensates the normal direction of the noise produced by the isotropic Brownian motion and leaves the tangential component active.  In fact, we will initially analyze the well-posedness of the system \eqref{stochastic Kuramoto-Vicsek} as defined in the whole space $\RR^d$ instead of being immediately considered constrained on the hypersurface $\Gamma$, and we will check afterwards that, if the initial data $(V_0^i)_{i=1,\dots,N}$ is set on the hypersurface, then the particles $((V_t^i)_{t\geq 0})_{i=1,\dots,N}$ remain there at all times. 
We further notice that the dynamics does not make use of any derivative of $\EE$, but only of its pointwise evaluations, which appear integrated in \eqref{ValphaE}. 
Hence, the equation can be in principle numerically implemented at discrete times also for cost functions $\EE$ which are just continuous and with no further smoothness. We require below more regularity to $\EE$ exclusively to ensure formal well-posedness of the evolution.

The main results of this paper are about the  well-posedness of \eqref{stochastic Kuramoto-Vicsek} and its rigorous mean-field limit - which is an open issue for unconstrained CBO \cite{carrillo2018analytical} -  to the following nonlocal, nonlinear Fokker-Planck equation
\begin{equation}\label{PDE}
\partial_t \rho_t= \lambda \nabla_{\Gamma} \cdot (P(v)(v-v_{\alpha,\EE}(\rho_t)) \rho_t)+\frac{\sigma^2}{2}\Delta_{\Gamma} (|v-v_{\alpha,\EE}(\rho_t) |^2\rho_t),\quad t>0,~v\in\Gamma\,,
\end{equation}
with the initial data $\rho_0\in\mc{P}(\Gamma)$. Here $\rho=\rho(t,v)\in \mc{P}(\Gamma)$ is a Borel probability measure on $\Gamma$ and
\[
v_{\alpha,\EE}(\rho_t)  = \frac{\int_{\Gamma} v \omega_\alpha^\EE(v)\,d\rho_t}{\int_{\Gamma} \omega_\alpha^\EE(v)\,d\rho_t}.
\]
The operators $\nabla_{\Gamma} \cdot$ and $\Delta_{\Gamma} $ denote the divergence and Laplace-Beltrami operator on the hypersurface $\Gamma$ respectively. The mean-field limit will be achieved through the coupling method \cite{sznitman1991topics,fetecau2019propagation,huang2017error,huang2019learning,bolley2011stochastic} by introducing  an auxiliary mono-particle process, satisfying the self-consistent nonlinear SDE  
\begin{align} \label{selfprocess}
d\overline V_t=-\lambda  P(\overline V_t)(\OV_t-v_{\alpha,\EE} (\rho_t)) dt + \sigma |\overline V_t - v_{\alpha,\EE}(\rho_t)  | P(\overline V_t)dB_t-\frac{\sigma^2}{2}(\overline V_t-v_{\alpha,\EE} (\rho_t) )^2\Delta \gamma(\OV_t)\nabla\gamma(\OV_t)dt\,,
\end{align}
with the initial data $\OV_0$ distributed according to $\rho_0\in\mc{P}(\Gamma)$. Here we  require $\rho_t=\rm{law}(\overline V_t)$, and we will show that $\rho_t$ as a measure concentrated on $\Gamma$ solves the PDE \eqref{PDE}.
We call the SDE \eqref{selfprocess} the mean-field dynamic, and the PDE \eqref{PDE} is called the  mean-field PDE. In our follow up paper Ref. \cite{fhps20-2}, for more regular datum $\rho_0$, we also prove existence and uniqueness of distributional solutions $\rho \in L^2([0,T],H^1(\Gamma))$ at any finite time $T>0$.
This model and the analysis we provide in this paper are very much inspired by the homogeneous version of the kinetic Kolmogorov-Kuramoto-Vicsek model  on the sphere $\Gamma=\mathbb S^{d-1}$, which was formally derived by Degond and Motsch \cite{demo07}  as a mean-field limit of the discrete Vicsek model \cite{alhu03,coetal02,Vicsek1995NovelTO}. Recently, the stochastic Vicsek model has received extensive attention in the mathematical community to establish its mean-field limit, hydrodynamic limit, and phase transitions. Bolley, Ca\~{n}izo and Carrillo \cite{bolley2012mean} have rigorously justified the mean-field limit in case of smoothed force field, and Gamba and Kang \cite{gaka15} and Figalli, Kang, and Morales \cite{fikamo18} extended the global existence and uniqueness of weak solutions of the kinetic Kolmogorov-Kuramoto-Vicsek  model to the case of singular mean-field force field. Recently, Degond, Frouvelle and Liu \cite{defrli13} provided also a complete and rigorous description of phase transitions such as the number and nature of equilibria, stability, convergence rate, phase diagram and hysteresis. Our results build very much upon these developments.

Before starting our analysis of \eqref{stochastic Kuramoto-Vicsek}, \eqref{selfprocess}, and \eqref{PDE}, let us illustrate
numerically the behavior of the dynamics in the case of the minimum solution of the Ackley function for $d=3$  constrained over an hypersurface $\Gamma$ 
\be
\EE(v)= -A \exp\left(-a\sqrt{\frac{b^2}{d}\sum_{k=1}^{d} (v_k-v^*_k)^2}\right)-\exp\left(\frac1{d}\sum_{k=1}^{d} \cos(2\pi b(v^k-v^k_*))\right)+e+B,
\ee
with $A=20$, $a=0.2$, $b=3$, $B=20$ and $v=(v^1,\ldots,v^d)^T$  with $\gamma(v)=0$. 

The global minimum is at $v=v_*$.  In Figure \ref{fg:ackley} we report in the case $d=3$ the Ackley function constrained over the half sphere $v^3 \geq 0$ and over the torus with external radius $R=1$ and inner radius $r=0.5$. The colors over the surface represent the values attained by the function.


\begin{figure}[tb]
	\includegraphics[scale=0.39]{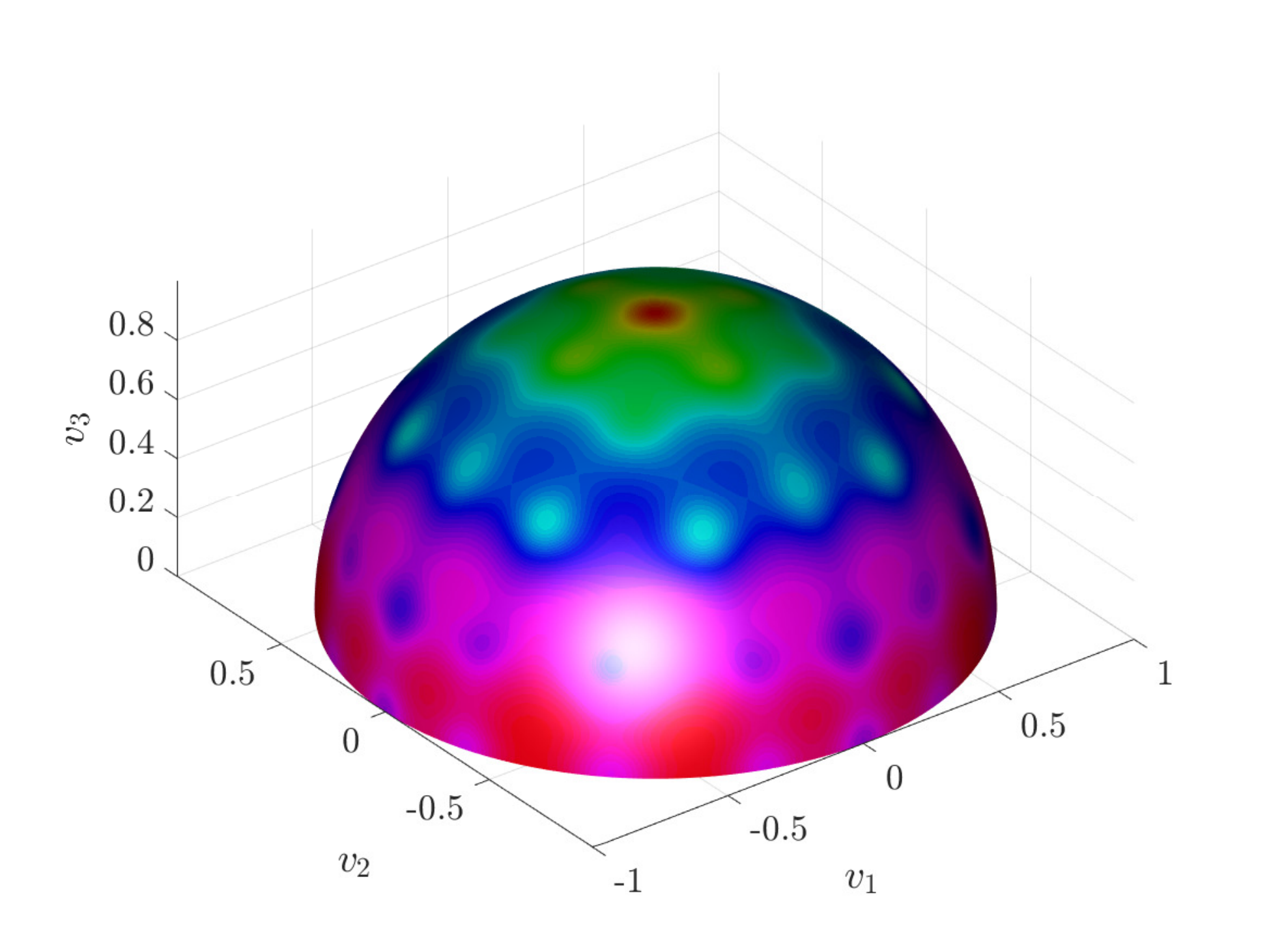}\,
	\includegraphics[scale=0.39]{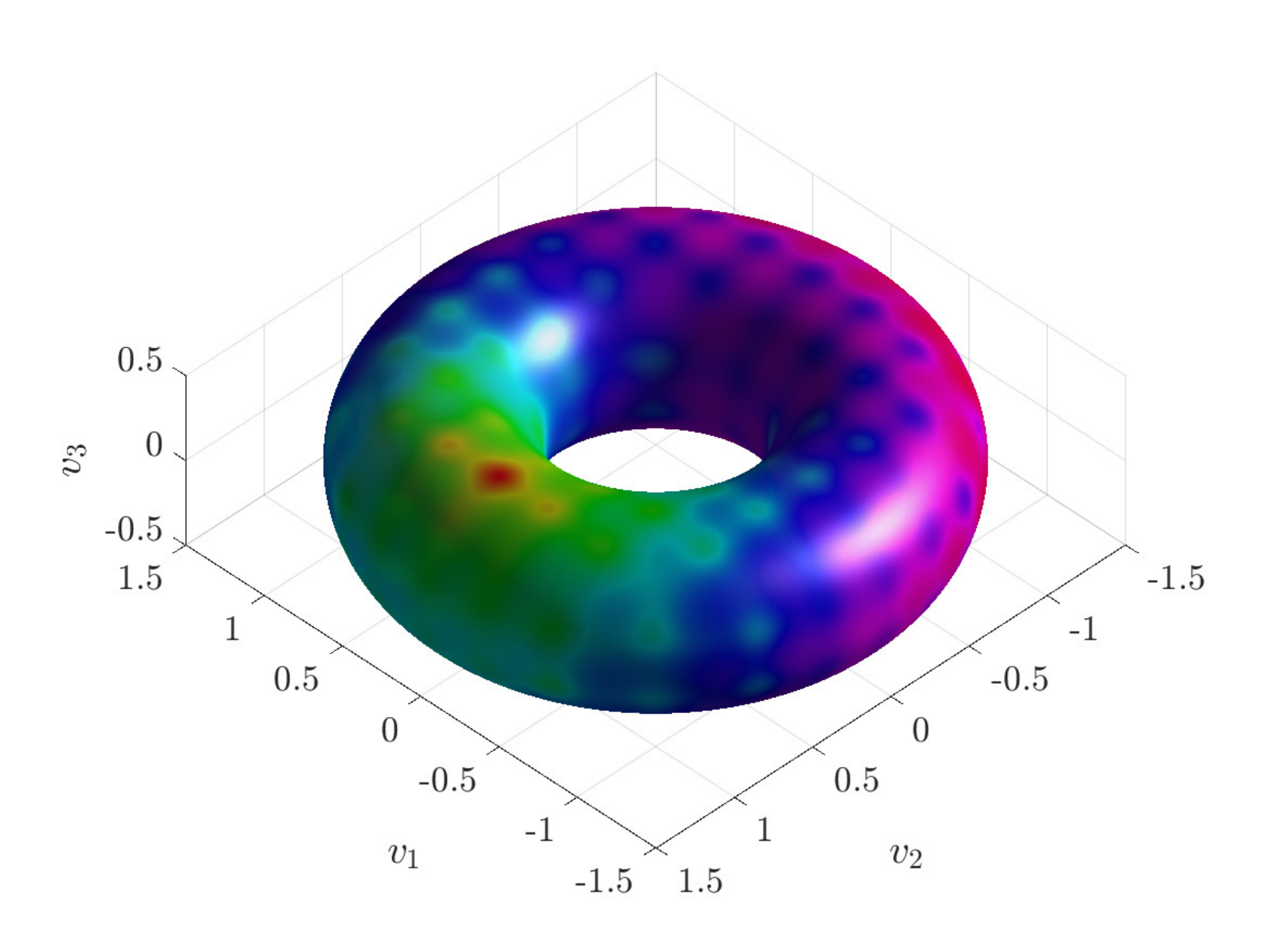}\,
	\caption{The Ackley function for $d=3$ in the constrained case on two example hypersurfaces. The half sphere $\SS^2$ (left) and the torus with $R=1$ and $r=0.5$ (right). The global minimum on $\SS^2$ corresponds to the point $v_*=(0,0,1)^T$, over the torus to the point $v_*=(0,1,0.5)^T$.}
	\label{fg:ackley}
\end{figure}


\begin{figure}[htb]
	\hskip .2cm
	\includegraphics[scale=0.2]{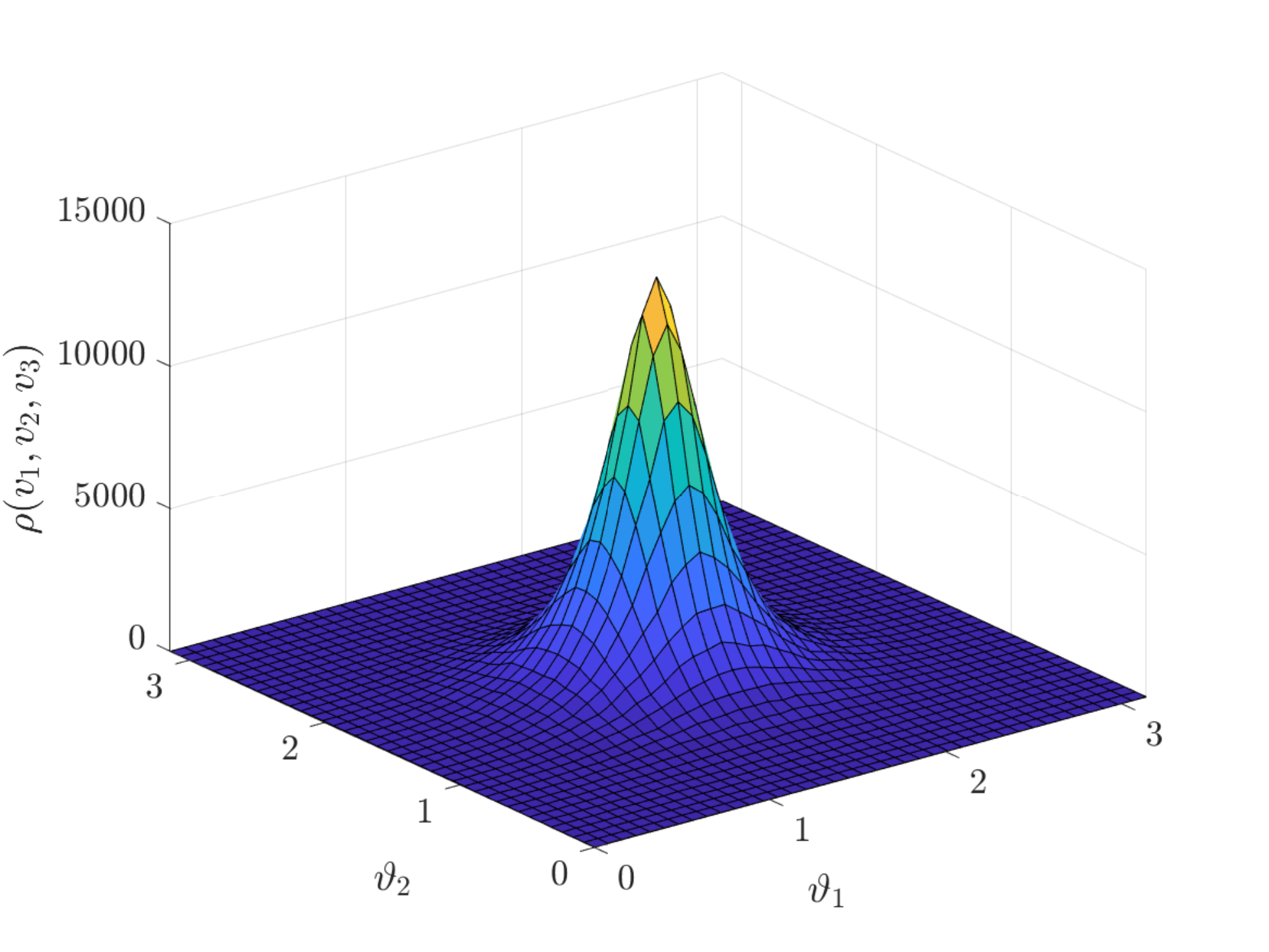}\hskip -.25cm
	\includegraphics[scale=0.2]{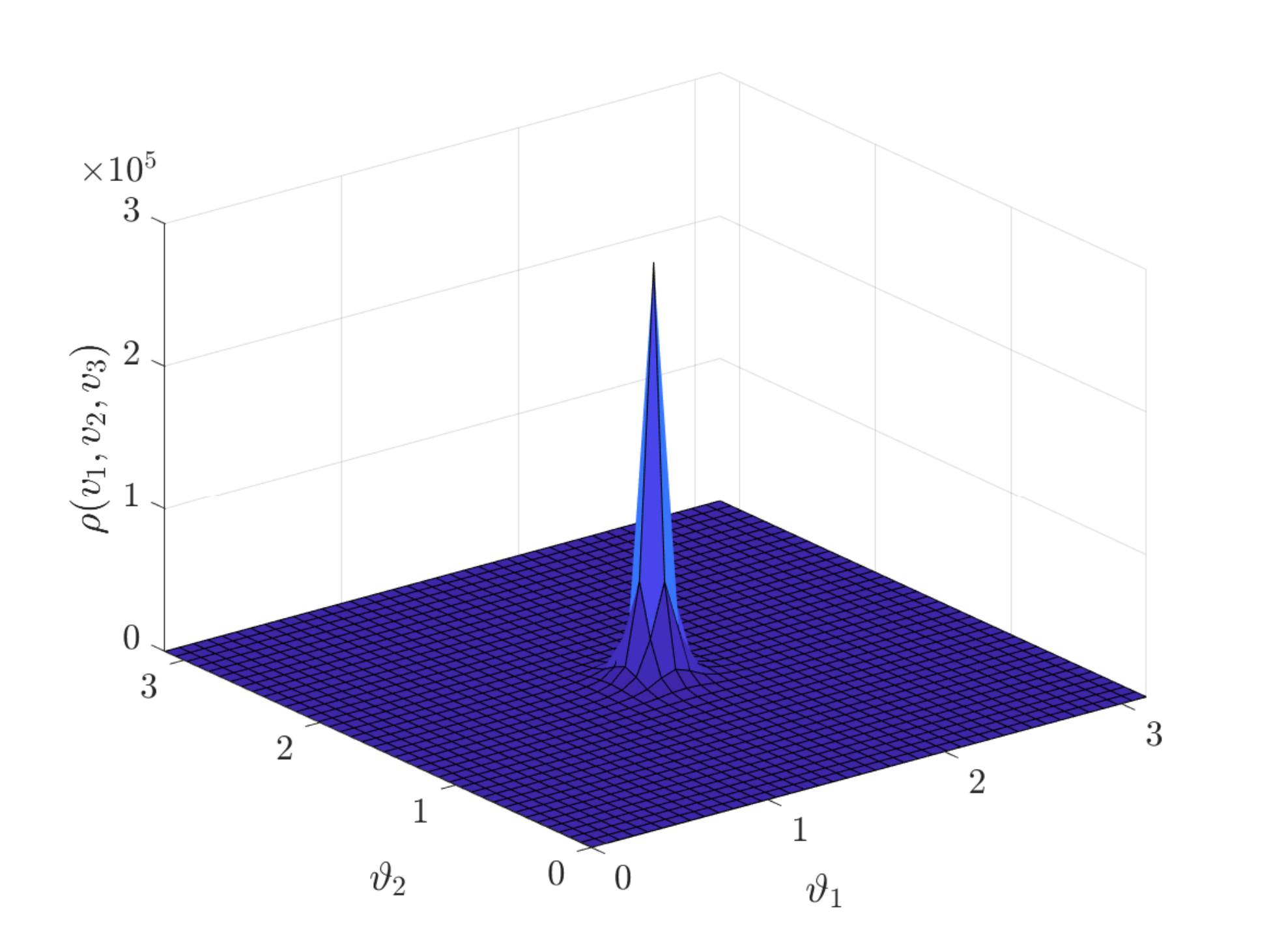}\hskip .4cm
	\includegraphics[scale=0.2]{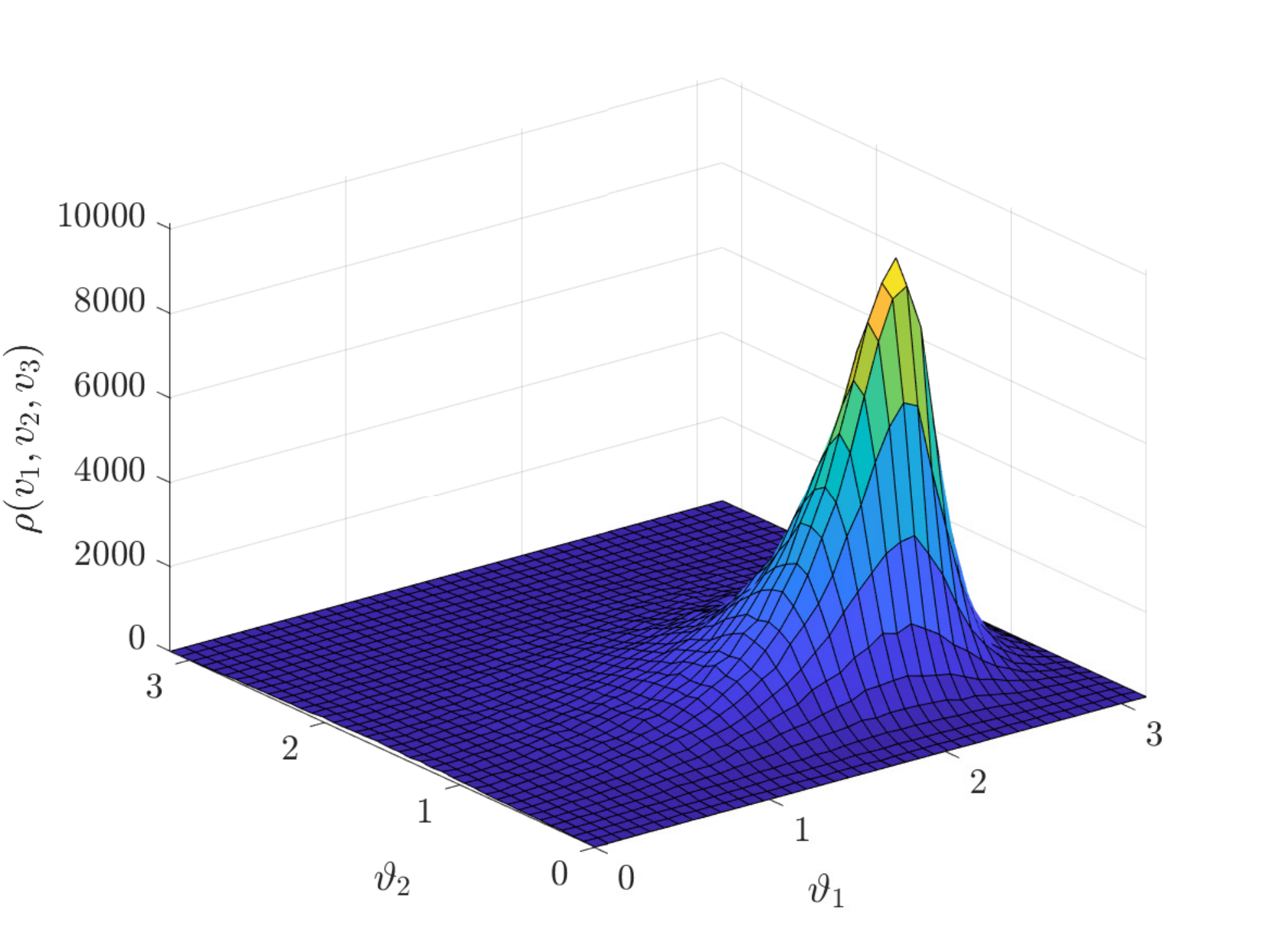}\hskip -.25cm
	\includegraphics[scale=0.2]{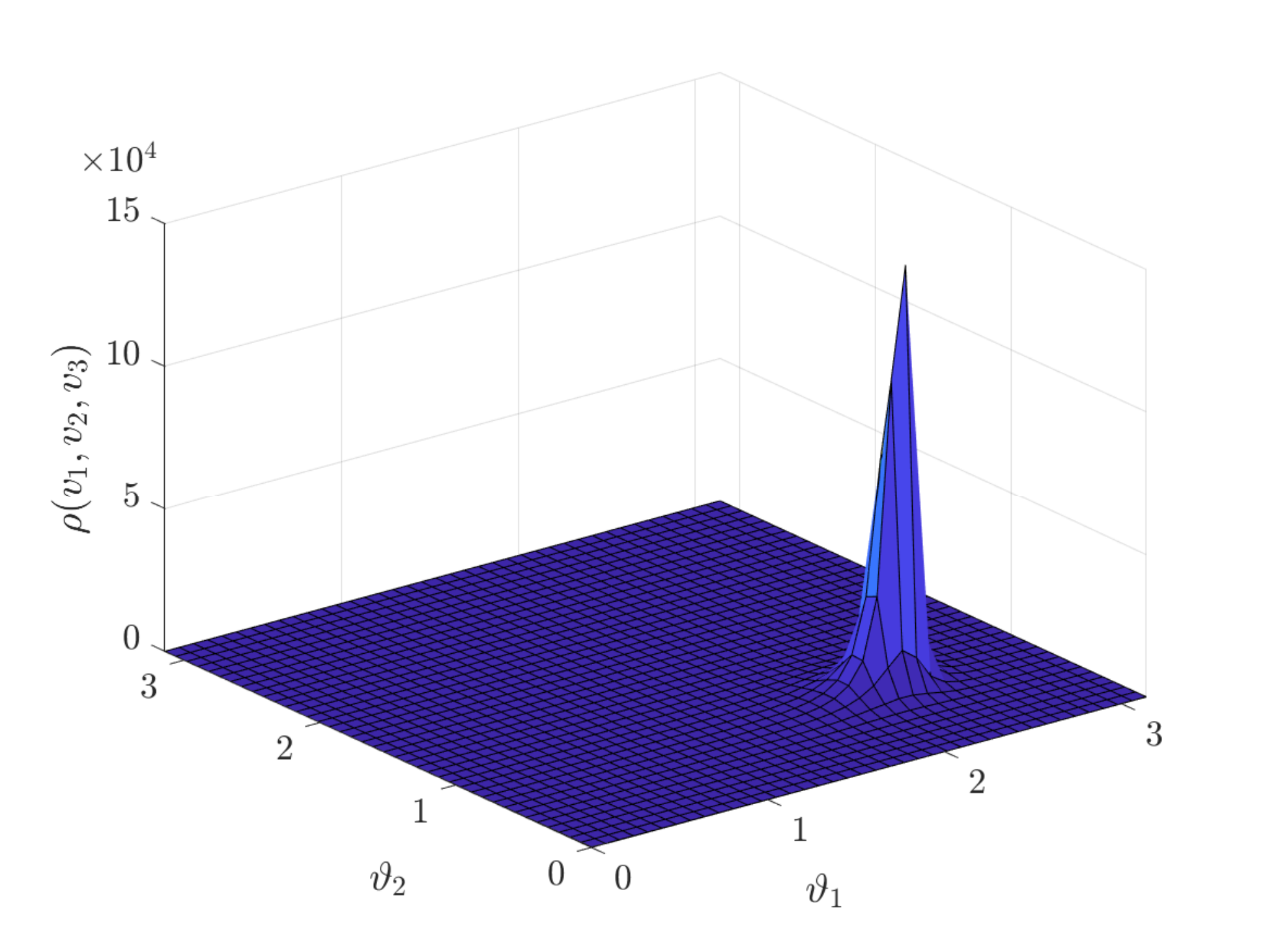}\\[-.2cm]
	\includegraphics[scale=0.39]{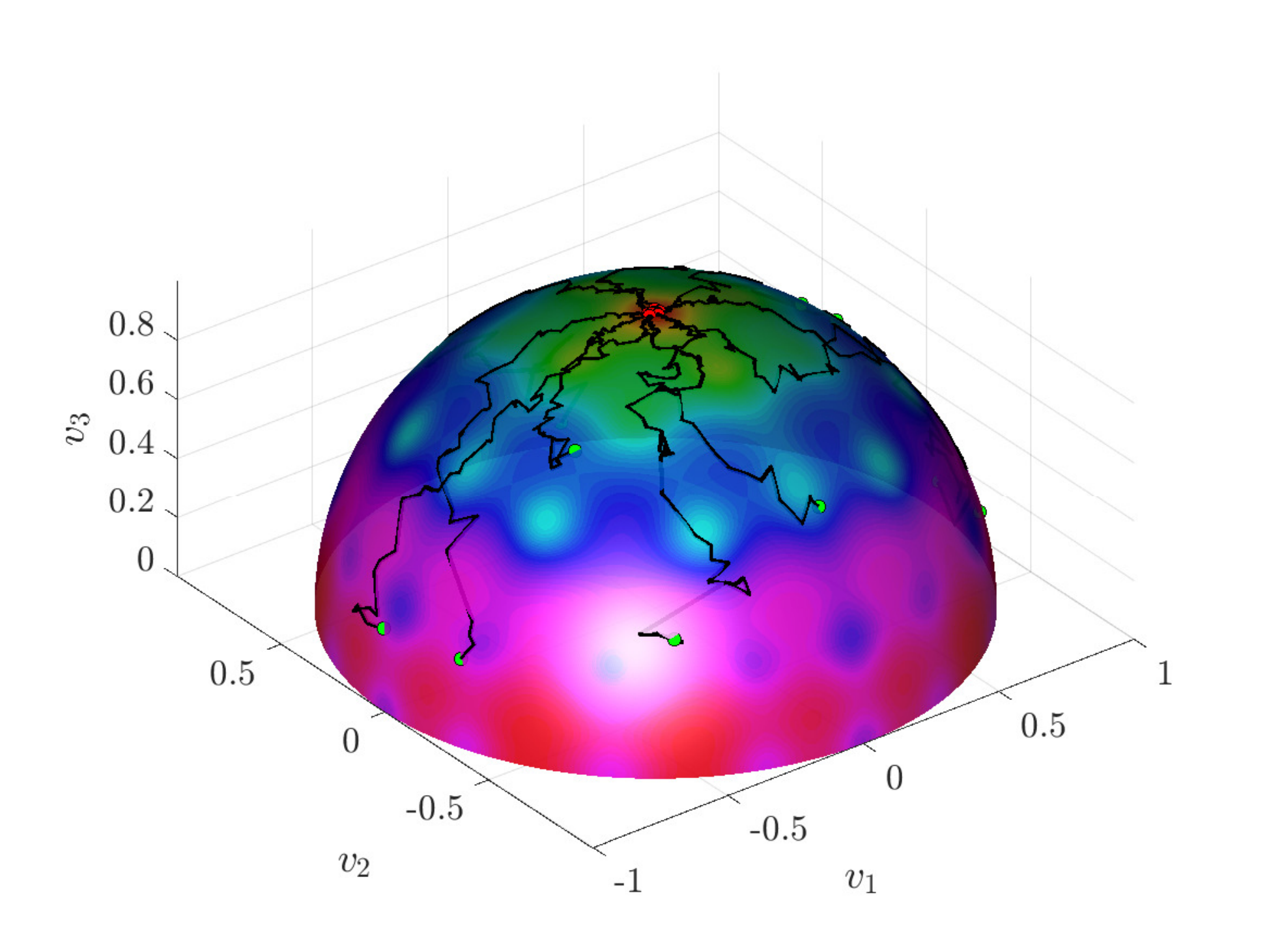}\,\hskip .2cm
	\includegraphics[scale=0.39]{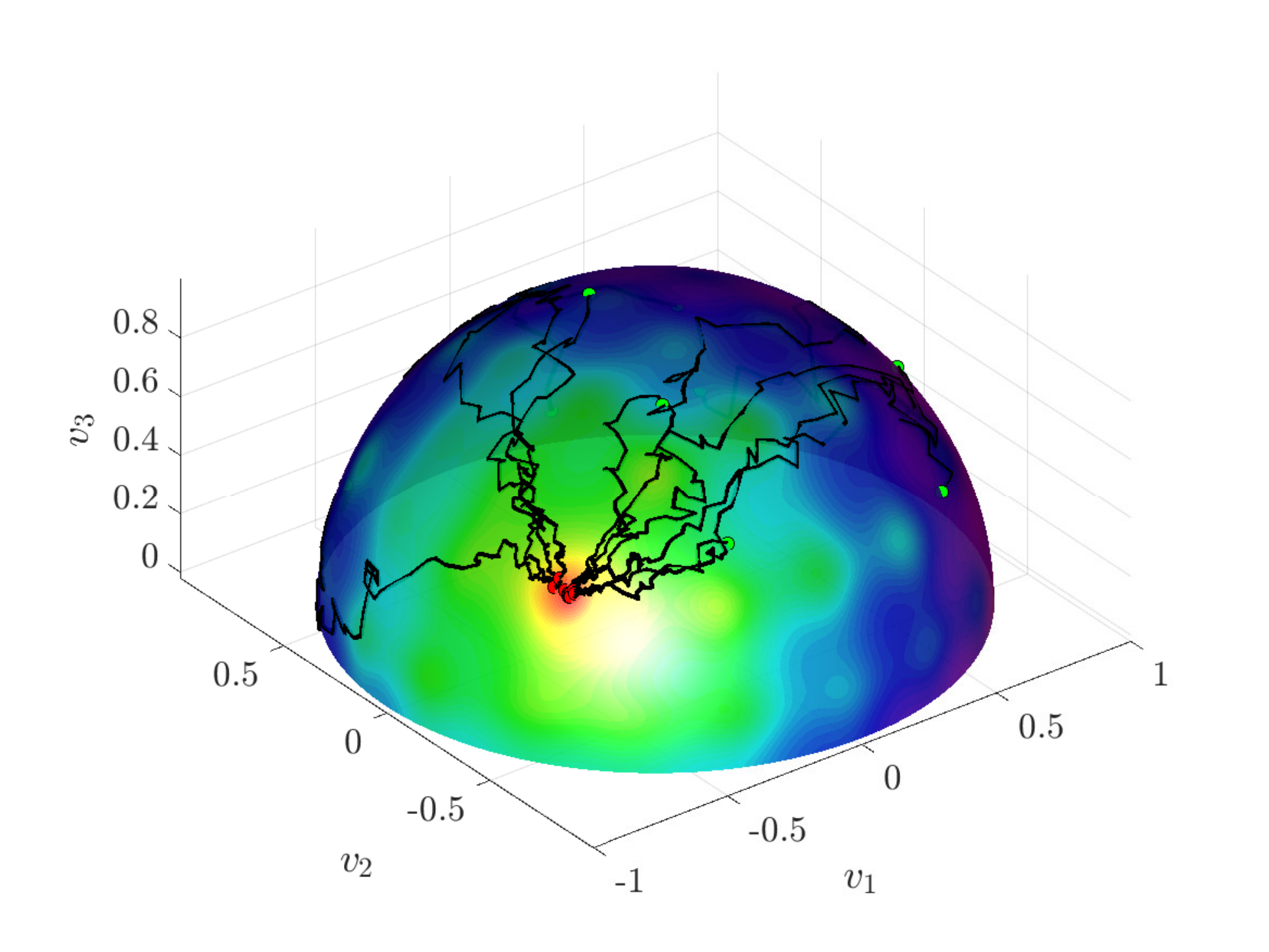}\,
	\caption{Particles trajectories along the simulation for the Ackley function on the half-sphere in the case $d=3$, $N=20$ with minimum at $v_*=(0,0,1)^T$ (left) and $v_*=(-1/\sqrt{2},-1/2,1/2)^T$ (right). On the top corresponding time evolution of the particle distribution $\rho(t,v)$ in angular coordinates at $t=1$ and $t=2.5$ for $N=10^6$. The simulation parameters are $\Delta t=0.05$, $\sigma=0.25$ and $\alpha=50$.  Although we run the simulation on the entire sphere, we display the result  on the half-sphere because, eventually, all particles do converge to one point, hence, all particles do eventually belong to a half sphere (mind that the stochastic process is a time continuous function). Displaying half-sphere is simply aesthetically more pleasing and it allows a better reading of the figures. On the other hand, as soon as one generates initial particles on the half-sphere containing a minimizer, then they - empirically - tend generically to remain on that half sphere, hence there is no need of displaying the rest of the sphere and we do not use any confinement mechanism rather than the given dynamics.  Also boundary conditions are actually not imposed. }
	\label{fg:ackley_traj}
\end{figure}

\begin{figure}[tb]
	\includegraphics[scale=0.39]{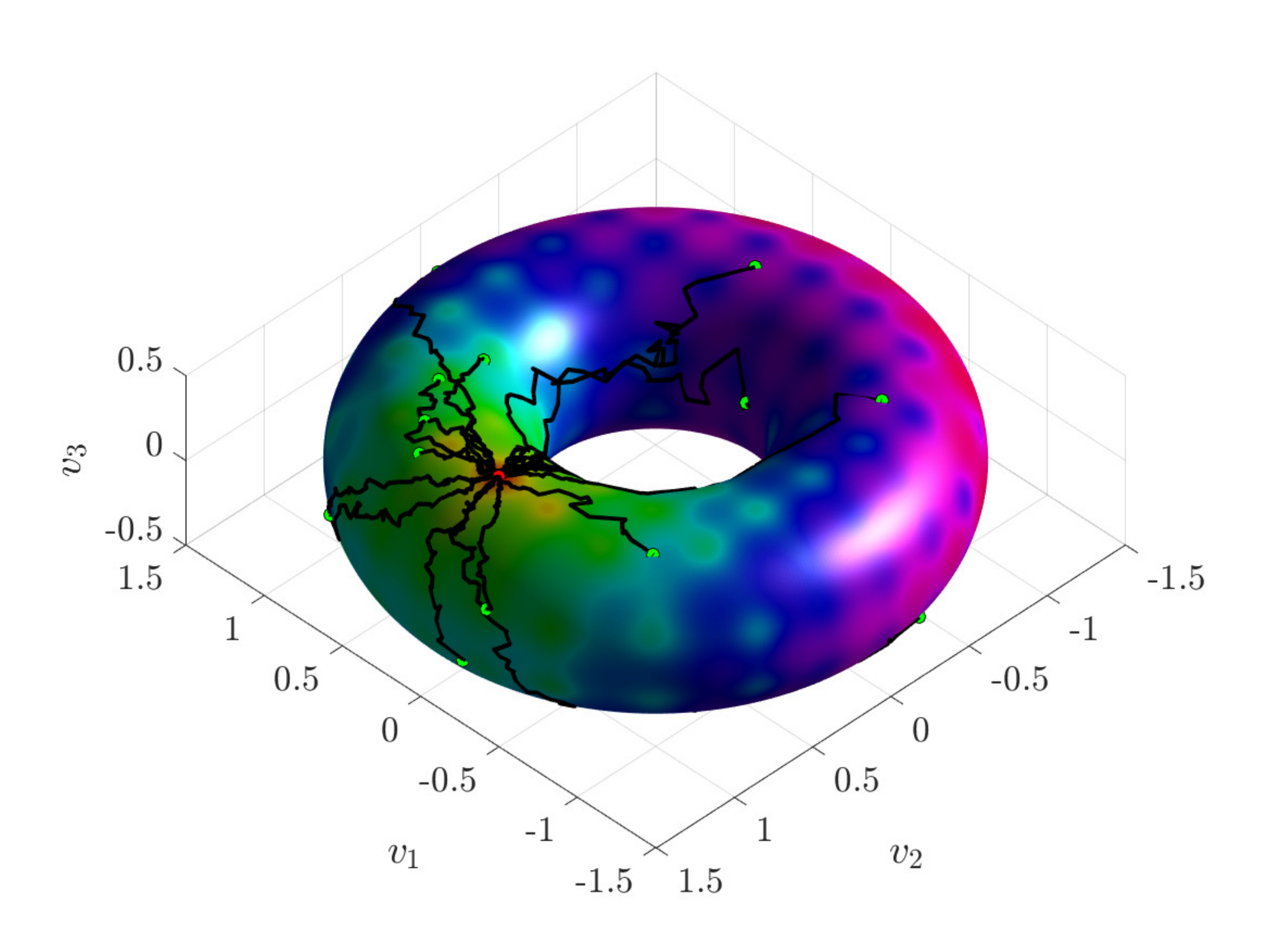}\,
	\includegraphics[scale=0.39]{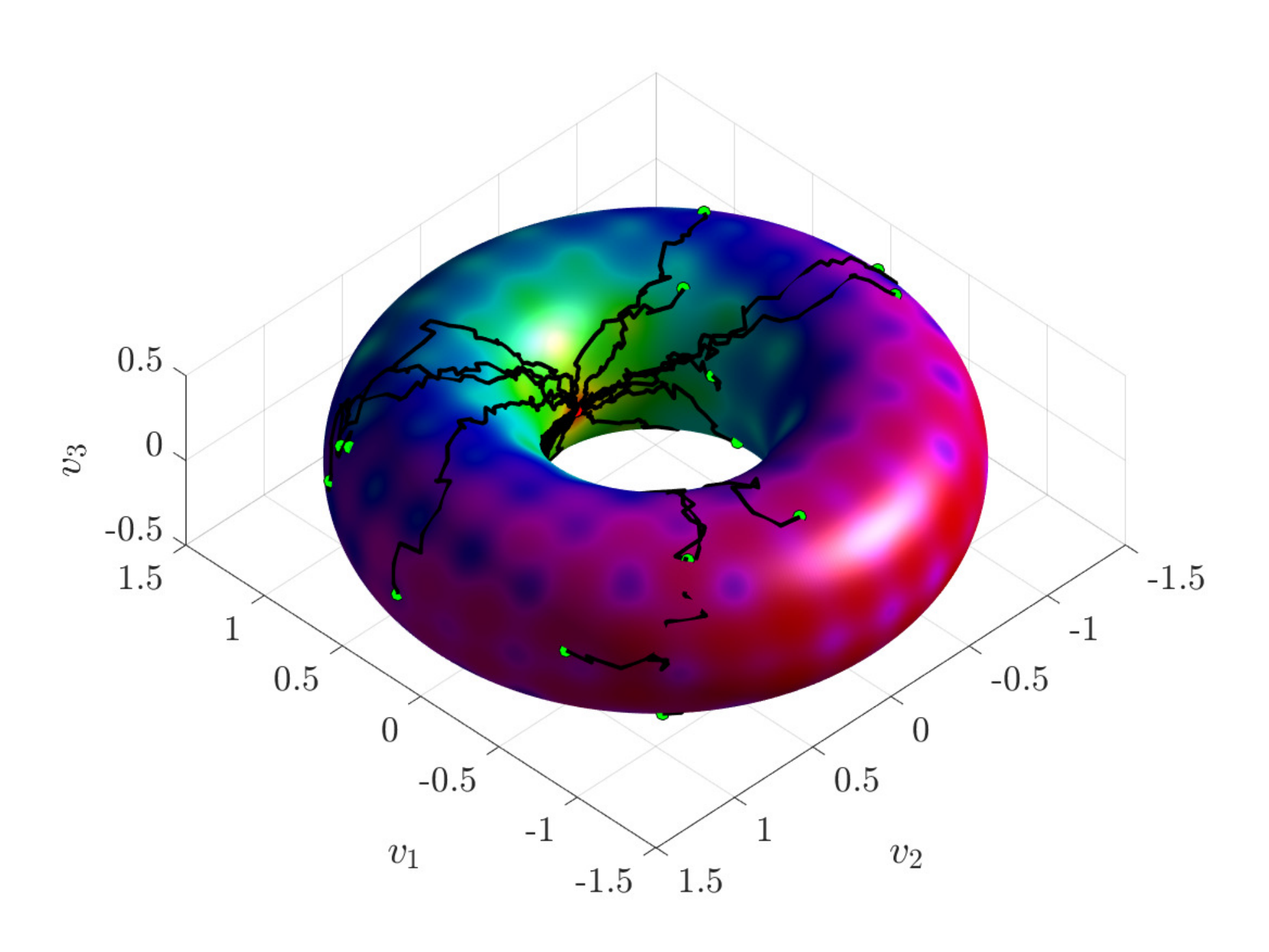}\,
	\caption{Particles trajectories along the simulation for the Ackley function constrained on the torus with $R=1$ and $r=0.5$ in the case $d=3$, $N=20$ with minimum at $v_*=(0,1,0.5)^T$ (left) and $v_*=(0.5,0,0)^T$ (right). The simulation parameters are $\Delta t=0.05$, $\sigma=0.25$ and $\alpha=50$. }
	\label{fg:ackleyt_traj}
\end{figure}


In the reported simulations we initialize the particles with a uniform distribution over the hypersurface (this typically requires the use of a suitable algorithm, see Ref. \cite{fhps20-2}) 
and employ a simple Euler-Maruyama scheme with projection and time step $\Delta t>0$. We report in Figure \ref{fg:ackley_traj} the case of the half-sphere together with the particle trajectories for $t\in [0,5]$ with $N=20$, $\Delta t=0.05$, $\sigma=0.25$ and $\alpha=50$. On the left we consider the case with minimum at $v_*=(0,0,1)^T$, on the right the case with minimum at $v_*=(1/\sqrt{2},-1/2,1/2)^T$. The time evolution of the particle distribution $\rho(t,v)$ in the numerical mean field limit for $N=10^6$ is also reported in the upper part of the same figure.

In Figure \ref{fg:ackleyt_traj} we report an analogous simulation in the constrained case on the torus. The numerical parameters are the same as in the previous case. On the left the minimum is located at $v_*=(0,1,0.5)^T$, whereas on the right is located at $v_*=(0.5,0,0)^T$.


As these figures show it, the dynamics consistently converges to the global minimizers totally regardless of the many local minimizers and the large particle system does converges numerically to its mean-field approximation.  In the related paper Ref. \cite{fhps20-2}  we rigorously analyze the large time behavior of the system for  $\Gamma = \mathbb S^{d-1}$, we prove its convergence to global minimizers in a suitable sense, and we provide several applications in machine learning. A proof of convergence to global minimizers for more general hypersurfaces is the subject of current investigation. 
\\

Let us conclude this introduction by mentioning that the optimization on the hypersurface offers further numerous advantages, besides allowing a rigorous proof of mean-field limit  thanks to the compactness of the hypersurface.  First of all a vast class of optimization problems can be reduced to constrained optimizations over  hypersurfaces: in a related paper Ref. \cite{fhps20-2}, where we focus on the numerical implementation of the method on the sphere and its asymptotic/convergence analysis  to global minimizers, we present also a few relevant and challenging applications in signal processing and machine learning. 
Due to  compactness of the hypersurface, local smoothness and boundedness requirements on $\EE$ are necessarily a uniform and global property. However, against these properties that greatly simplify the analysis of the well-posedness of the system and its mean-field limit, the specific topology of the hypersurface may make 
it  harder to prove asymptotic convergence of the dynamics to global miminizers, requiring major technical variations with respect to the approach of unconstrained CBO \cite{carrillo2018analytical}. We refer to Ref. \cite{fhps20-2} for the details of the large time analysis of solutions $\rho(t,v)$ of \eqref{PDE} on the sphere and examples of applications in high-dimension. There we needed to develop {\it ad hoc} arguments to prove the asymptotic convergence.  A similar consensus dynamics could be in principle defined also on more general compact manifolds \cite{emery1989stochastic}, but we still lack a general approach to deal with its (global) asymptotic analysis and this research direction needs certainly further investigations. 
\\

The rest of the paper is organized as follows: in Section \ref{sec:wellposedness} we  show that the equations \eqref{stochastic Kuramoto-Vicsek}, \eqref{PDE} and \eqref{selfprocess} are well-posed, that is, there is a unique solution for any initial distribution $\rho_0$, which depends continuously on the initial datum, and in Section \ref{sec:MFlimit} we address the mean-field limit.

\section{Well-posedness}\label{sec:wellposedness}
This section focuses on proving the well-posedness for the particle system \eqref{stochastic Kuramoto-Vicsek}, the mean-field dynamic \eqref{selfprocess} and the mean-field PDE \eqref{PDE}. To ensure this, we make the following smoothness assumption on the objective function $\EE$.
\begin{assu}\label{asum1}
	The objective function $0\leq \EE:\mathbb R^{d}\rightarrow \RR$ is locally Lipschitz continuous.
\end{assu}

\subsection{Well-posedness for the interacting particle system \eqref{stochastic Kuramoto-Vicsek}}
Recall that the particle system \eqref{stochastic Kuramoto-Vicsek} is assumed to evolve in the whole space $\RR^d$ instead of on the hypersurface $\Gamma$ directly. We choose this embedding because it provides an explicit and computable representation of the system and it allows for a global description.
The difficulty in showing first  the well-posedness of \eqref{stochastic Kuramoto-Vicsek} in the ambient space $\RR^d$  is that the projection $P(V_t^{i})$, $\Delta \gamma (V_t^{i})$ and $\nabla\gamma (V_t^{i})$ may not be well defined when $V_t^{i}\notin \widehat\Gamma$.  For example, in the case of the sphere $\SS^{d-1}$, the projection $P(V_t^{i})=I-\frac{V_t^{i}(V_t^{i})^T}{|V_t^{i}|^2}$ and $\Delta\gamma (V_t^{i})\nabla\gamma (V_t^{i})=(d-1)\frac{V_t^{i}}{|V_t^{i}|^2}$  are not defined for $V_t^{i}=0$.
In order to overcome this problem, we regularize the diffusion and drift coefficients, that is, we replace them with appropriate functions $P_1$, $P_2$ and $P_3$ respectively: let $P_1$ be a $d\times d$ matrix valued map on $\RR^d$ with bounded derivatives such that $P_1(v)= P(v)$ for all $v\in\widehat\Gamma$, $P_2$ be a $\RR^d$ valued map on $\RR$ with bounded derivatives such that 
$P_2(v)=\Delta\gamma(v)$ if $v\in \OG$, and $P_3$ be a $\RR^d$ valued map on $\RR^d$, again with bounded derivatives such that 
$P_3(v)=\nabla\gamma(v)$ if $v\in \OG$.   It is also useful to mention that
\begin{equation}
P(v)\nabla\gamma(v)=0
\end{equation}
and
\begin{equation}\label{P2}
\nabla\gamma(v)\cdot P(v)y=0
\end{equation}
hold for any $y\in \RR^d$. For later use, let us denote $[N] = \{1,\dots,N\}$. 
Additionally, we regularize the locally Lipschitz function $\EE$: Let us introduce $\TE(v)$ satisfying the following assumptions 
\begin{assu}\label{asum}
	The regularized extension function $\TE:\RR^d\rightarrow \RR$ is globally Lipschitz continuous and satisfies the properties
	\begin{itemize}
		\item[1.]  $\TE(v)=\EE(v)$ when $v\in \OG$;
		\item[2.]  $\TE(v)-\TE(u)\leq L|v-u|$ for all $u,v\in\RR^d$ for a suitable global Lipschitz constant $L>0$;
		\item[3.]	$-\infty < \underline{\TE}:=\inf \TE \leq \TE\leq \sup \TE=:\overline \TE < + \infty$\,.
	\end{itemize}
\end{assu}
\begin{remark}
	Here $\TE$ is introduced as an auxiliary function for the proof of well-posedness and mean-field limit only, and it does not play any role in actual optimization problem, which is defined on $\Gamma$. Indeed,
	as we can see in Theorem \ref{thmwellposednessofparticle} below, particles stay on the hypersurface $\Gamma$ all the time, which means that certainly $v\in \OG$, so one has $\TE(v)\equiv\EE(v)$. 
\end{remark}

We note here that $\EE$ and $\TE$ appearing the sequel always satisfy Assumptions \ref{asum1} and \ref{asum}. 
Given such $P_1$, $P_2$, $P_3$ and $\TE$, we introduce the following regularized particle system
\begin{align} \label{RSKV}
dV_t^i &= -\lambda P_1(V_t^i)(V_t^i-v_{\alpha,\TE}(\rho_t^N))dt + \sigma |V_t^i - v_{\alpha,\TE}(\rho_t^N)| P_1(V_t^i)dB_t^i-\frac{\sigma^2}{2}(V_t^i-v_{\alpha,\TE}(\rho_t^N))^2P_2(V_t^i)P_3(V_t^i)dt\,,
\end{align}
for $i \in [N]$,  where 
\begin{equation}\label{ValphaERd}
v_{\alpha,\TE}(\rho_t^N)=\frac{\int_{\mathbb R^{d}}v\omega_\alpha^{\TE} (v)d\rho_t^N}{\int_{\mathbb R^{d}}\omega_\alpha^{\TE}(v)d\rho_t^N}\,, \qquad  \omega_\alpha^{\TE} (v)=e^{-\alpha{\TE} (v)}\,.
\end{equation}

Next we can deduce that the coefficients in \eqref{RSKV}  are locally Lipschitz continuous and have linear growth. More precisely, we have the following result.

\begin{lemma}\label{lemlocal}
	Let $N\in \mathbb{N}$, $\alpha>0$ be arbitrary and $\TE$ satisfy Assumption \ref{asum}. Then for any $\mb{V}^N,\widehat{\mb{V}}^N\in \RR^{Nd}$, 
	and corresponding empirical measures $\rho^N=\frac{1}{N}\sum_{i=1}^{N}\delta_{V^i}\,,$ and $\widehat \rho^N=\frac{1}{N}\sum_{i=1}^{N}\delta_{\widehat V^i}\,,$
	it holds
	\begin{equation}
	|v_{\alpha,\TE}(\rho^N)|\leq  \frac{1}{N}C_{\alpha,\TE}\|\mb{V}^N\|_1
	\end{equation}
	and
	\begin{equation}
	|v_{\alpha,\TE}(\widehat \rho^N)-v_{\alpha,\TE}(\rho^N)|\leq \left(\frac{C_{\alpha,\TE}}{N}+\frac{2\alpha LC_{\alpha,\TE}}{N}\|\mb{\widehat V}^N\|_\infty\right)\|\mb{V}^N- \mb{\widehat V}^N\|_1\,,
	\end{equation}
	where $C_{\alpha,\TE}=e^{\alpha(\overline \TE-\underline \TE)}$. Here we used the notations for norms of vectors $\|\mb{V}\|_\infty=\sup_{i \in [N]}|V^i|$ and $\|\mb{V}\|_1=\sum_{i=1}^N|V^i|$.
\end{lemma}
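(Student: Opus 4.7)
The plan is to exploit the convex-combination structure of $v_{\alpha,\TE}(\cdot)$. For the first bound, I observe that Assumption \ref{asum} yields the two-sided estimate $e^{-\alpha\overline\TE} \leq \omega_\alpha^\TE(v) \leq e^{-\alpha\underline\TE}$ for every $v \in \RR^d$, and hence each normalized weight
\[
\widehat\omega_j := \frac{\omega_\alpha^\TE(V^j)}{\sum_k \omega_\alpha^\TE(V^k)}
\]
is bounded above by $e^{\alpha(\overline\TE-\underline\TE)}/N = C_{\alpha,\TE}/N$. Since $v_{\alpha,\TE}(\rho^N) = \sum_{j=1}^N V^j \widehat\omega_j$, the triangle inequality immediately delivers the first claim.

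For the second bound, I would use an algebraic telescoping. Abbreviating $\omega_j = \omega_\alpha^\TE(V^j)$, $\widetilde\omega_j = \omega_\alpha^\TE(\widehat V^j)$, $S=\sum_k \omega_k$, $\widetilde S = \sum_k \widetilde\omega_k$, a direct cross-multiplication verifies the identity
\[
v_{\alpha,\TE}(\widehat\rho^N) - v_{\alpha,\TE}(\rho^N) = v_{\alpha,\TE}(\widehat\rho^N)\,\frac{S - \widetilde S}{S} \;-\; \frac{1}{S}\sum_{j=1}^N \widehat V^j(\omega_j - \widetilde\omega_j) \;+\; \frac{1}{S}\sum_{j=1}^N (\widehat V^j - V^j)\omega_j,
\]
which separates the difference into three pieces amenable to independent estimation. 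The three ingredients I need are: (i) the lower bound $S \geq N e^{-\alpha\overline\TE}$, so $1/S \leq e^{\alpha\overline\TE}/N$; (ii) the pointwise estimate $|\omega_j - \widetilde\omega_j| \leq \alpha L e^{-\alpha\underline\TE}|V^j - \widehat V^j|$, which follows from $|e^{x}-e^{y}|\leq e^{\max(x,y)}|x-y|$ together with the global Lipschitz property of $\TE$; and (iii) the sharp a priori estimate $|v_{\alpha,\TE}(\widehat\rho^N)| \leq \|\mb{\widehat V}^N\|_\infty$, coming again from the convex-combination structure $v_{\alpha,\TE}(\widehat\rho^N) = \sum_j \widehat V^j \widetilde\omega_j/\widetilde S$.

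Equipped with these, each of the three pieces of the decomposition contributes at most a constant times $\|\mb V^N - \mb{\widehat V}^N\|_1$: the first and second terms each contribute $(\alpha L C_{\alpha,\TE}/N)\|\mb{\widehat V}^N\|_\infty$, whereas the third contributes $C_{\alpha,\TE}/N$, and summing yields the stated inequality. The only delicate point is (iii): applying instead the cruder bound $|v_{\alpha,\TE}(\widehat\rho^N)| \leq C_{\alpha,\TE}\|\mb{\widehat V}^N\|_\infty$ inherited from the first part would inflate the constant to $C_{\alpha,\TE}^2$, so the convex-combination estimate is essential for matching the clean constants in the statement. The remainder of the argument is routine exponential/Lipschitz bookkeeping.
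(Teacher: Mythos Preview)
Your proof is correct and follows essentially the same route as the paper: the same three-term telescoping decomposition (the paper's $I_1,I_2,I_3$ are, up to an immaterial overall sign, exactly your three pieces), the same Lipschitz bound $|\omega_j-\widetilde\omega_j|\le \alpha L e^{-\alpha\underline\TE}|V^j-\widehat V^j|$, and the same convex-combination estimate $|v_{\alpha,\TE}(\widehat\rho^N)|\le \|\mb{\widehat V}^N\|_\infty$ for the term involving $S-\widetilde S$. Your remark about needing (iii) rather than the cruder first-part bound is well taken and is precisely what the paper does (implicitly) to obtain the constant $C_{\alpha,\TE}$ rather than $C_{\alpha,\TE}^2$.
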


\begin{proof}
	First, one notes that
	\begin{equation}
	e^{-\alpha \overline \TE}\leq \omega_{\alpha}^{\TE}(V^j)=e^{-\alpha \TE( V^j )}\leq  e^{-\alpha \underline \TE}\,.
	\end{equation}
	Then we have
	\begin{align*}
	|v_{\alpha,\TE}(\rho^N)|=\left| \frac{\sum_{j=1}^{N} V^j e^{-\alpha \TE( V^j )}}{\sum_{j=1}^{N}e^{-\alpha \TE( V^j )}}\right|\leq \frac{1}{N}e^{\alpha( \overline \TE-\underline \TE)} \sum_{j=1}^N | V^j | = \frac{1}{N}C_{\alpha,\TE} \|\mb{V}^N\|_1\,,\end{align*}
	where  $C_{\alpha,\TE} :=e^{\alpha( \overline \TE-\underline \TE)}$. Next we split the error
	\begin{align}
	v_{\alpha,\TE}(\widehat \rho^N)-v_{\alpha,\TE}(\rho^N)&=\frac{\sum_{j=1}^{N}( V^j - \widehat V^j )\omega_{\alpha}^{\TE}(V^j)}{\sum_{j=1}^{N}\omega_{\alpha}^{\TE}(V^j)}+\frac{\sum_{j=1}^{N}\widehat{V}^j(\omega_{\alpha}^{\TE}(V^j)-\omega_{\alpha}^{\TE}(\widehat V^j))}{\sum_{j=1}^{N}\omega_{\alpha}^{\TE}(V^j)}\notag\\
	&\quad +\frac{\sum_{j=1}^{N} \widehat V^j \omega_{\alpha}^{\TE}(\widehat V^j)\sum_{j=1}^{N}(\omega_{\alpha}^{\TE}(\widehat V^j)-\omega_{\alpha}^{\TE}( V^j))}{(\sum_{j=1}^{N}\omega_{\alpha}^{\TE}(V^j))(\sum_{j=1}^{N}\omega_{\alpha}^{\TE}(\widehat V^j))}\notag\\
	&=:I_1+I_2+I_3\,.
	\end{align}
	For $I_1$, the previous computation yields
	\begin{equation}\label{I1}
	|I_1|\leq \frac{1}{N}C_{\alpha,\TE}\|\mb{V}^N- \mb{\widehat V}^N\|_1\,,
	\end{equation}
	Notice that it holds
	\begin{align*}
	|\omega_{\alpha}^{\TE}(V^j)-\omega_{\alpha}^{\TE}(\widehat V^j)|=|e^{-\alpha \TE( V^j )}-e^{-\alpha \TE( \widehat V^j )}|\leq  \alpha e^{-\alpha \underline \TE}   |\TE( V^j )-\TE( \widehat V^j )|\leq \alpha Le^{-\alpha \underline \TE}|V^j-\widehat V^j|\,.
	\end{align*}
	Thus we conclude that
	\begin{equation}\label{I2}
	\max \{|I_2|\,,|I_3|\}\leq  \frac{\alpha LC_{\alpha,\TE}}{N}\|\mb{\widehat V}^N\|_\infty\|\mb{V}^N- \mb{\widehat V}^N\|_1\,.
	\end{equation}
	Collecting estimates \eqref{I1} and \eqref{I2} completes the proof.
\end{proof}

\begin{theorem}\label{thmwellposednessofparticle}
	Under Assumptions \ref{asum1} and \ref{asum}  let $\rho_0$ be a probability measure on $\Gamma$ and, for every $N\in\mathbb{N}$, $(V_0^i)_{i \in [N]}$ be $N$ i.i.d. random variables with the common law $\rho_0$.
	For every $N\in\mathbb{N}$, there exists a pathwise unique strong solution $((V_t^i)_{t\geq 0})_{i \in [N]}$ to the particle system \eqref{stochastic Kuramoto-Vicsek} with the initial data $(V_0^i)_{i \in [N]}$. Moreover it holds that $V_t^i\in \Gamma$ for all $i \in [N]$ and any $t>0$.
\end{theorem}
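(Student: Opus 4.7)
The plan is to establish pathwise uniqueness and local existence for the regularized system \eqref{RSKV} on $\RR^{Nd}$, then use It\^o's formula applied to the signed distance $\gamma$ to prove that particles initialized on $\Gamma$ remain on $\Gamma$, and finally exploit compactness of $\Gamma$ to rule out finite-time explosion. Since $P_1,P_2,P_3,\TE$ coincide with $P,\Delta\gamma,\nabla\gamma,\EE$ on $\widehat\Gamma\supset\Gamma$, the solution of \eqref{RSKV} will simultaneously solve \eqref{stochastic Kuramoto-Vicsek}.

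For local well-posedness of \eqref{RSKV}, I would combine the local Lipschitz and linear-growth estimates on $\mb{V}^N\mapsto v_{\alpha,\TE}(\rho^N)$ supplied by Lemma \ref{lemlocal} with the assumed bounded derivatives of $P_1,P_2,P_3$ and the global Lipschitz property of $x\mapsto |x|$ to deduce that the drift and diffusion coefficients of \eqref{RSKV}, viewed as maps on $\RR^{Nd}$, are locally Lipschitz. Classical existence/uniqueness theory for SDEs with locally Lipschitz coefficients then yields a pathwise unique strong solution up to an explosion time $\tau^{\ast}=\lim_{R\to\infty}\tau_R$, where $\tau_R=\inf\{t\geq 0:\|\mb{V}^N_t\|_\infty\geq R\}$. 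Note that the second drift term has quadratic growth, so only a local existence result is available at this stage.

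The core of the proof is invariance of $\Gamma$. Define $\tau_{\widehat\Gamma}=\inf\{t\geq 0:V_t^i\notin\widehat\Gamma\text{ for some }i\in[N]\}$; by path continuity and $V_0^i\in\Gamma\subset\widehat\Gamma$, $\tau_{\widehat\Gamma}>0$ almost surely. On $[0,\tau_{\widehat\Gamma}\wedge\tau^{\ast})$ all coefficients are smooth, so It\^o's formula applied to $\gamma(V_t^i)$ is justified. The contributions from the $\lambda$-drift and from the stochastic integral both vanish because identity \eqref{P2} gives $\nabla\gamma(v)^\top P(v)y=0$ for every $y\in\RR^d$. The explicit correction term produces a drift $-\tfrac{\sigma^2}{2}|V_t^i-v_{\alpha,\EE}|^2\Delta\gamma(V_t^i)|\nabla\gamma(V_t^i)|^2\,dt$, which equals $-\tfrac{\sigma^2}{2}|V_t^i-v_{\alpha,\EE}|^2\Delta\gamma(V_t^i)\,dt$ because $|\nabla\gamma|^2\equiv 1$ on $\widehat\Gamma$. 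The It\^o correction is
\[ \tfrac{\sigma^2}{2}|V_t^i-v_{\alpha,\EE}|^2\,\mathrm{Tr}\bigl[D^2\gamma(V_t^i)\,P(V_t^i)\bigr]\,dt = \tfrac{\sigma^2}{2}|V_t^i-v_{\alpha,\EE}|^2\Delta\gamma(V_t^i)\,dt, \]
where the equality uses $P^\top=P^2=P$ together with the structural identity $D^2\gamma\,\nabla\gamma=0$ obtained by differentiating $|\nabla\gamma|^2\equiv 1$ on $\widehat\Gamma$. Hence the two $\sigma^2/2$-terms cancel exactly, so $d\gamma(V_t^i)=0$; combined with $\gamma(V_0^i)=0$ this gives $V_t^i\in\Gamma$ for every $t<\tau_{\widehat\Gamma}\wedge\tau^{\ast}$, which in turn forces $\tau_{\widehat\Gamma}\geq\tau^{\ast}$.

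Since $\Gamma$ is compact, $V_t^i\in\Gamma$ for all $t<\tau^{\ast}$ and all $i$ keeps $\|\mb{V}^N_t\|_\infty$ uniformly bounded up to $\tau^{\ast}$, contradicting the defining property of the explosion time unless $\tau^{\ast}=+\infty$. This yields global existence and pathwise uniqueness of the solution to \eqref{RSKV}, and since this solution lives in $\Gamma\subset\widehat\Gamma$ where \eqref{RSKV} and \eqref{stochastic Kuramoto-Vicsek} coincide, the conclusion for the original system follows. The main obstacle, and the structural reason for the specific form of the SDE, is the It\^o cancellation above: the added drift $-\tfrac{\sigma^2}{2}(V^i-v_{\alpha,\EE})^2\Delta\gamma(V^i)\nabla\gamma(V^i)$ was introduced precisely to offset the normal component of the isotropic Brownian noise, and it balances the It\^o correction only thanks to the signed-distance identity $D^2\gamma\,\nabla\gamma=0$.
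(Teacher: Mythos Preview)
Your proposal is correct and follows essentially the same approach as the paper: regularize, obtain local strong solutions from local Lipschitz coefficients (via Lemma~\ref{lemlocal}), apply It\^o's formula to $\gamma(V_t^i)$ to show invariance of $\Gamma$, and use compactness of $\Gamma$ to exclude explosion. Your treatment is in fact slightly more careful than the paper's in two respects: you make the stopping-time argument ($\tau_R$, $\tau_{\widehat\Gamma}$) explicit rather than asserting boundedness in passing, and you compute the It\^o correction via the clean structural identity $D^2\gamma\,\nabla\gamma=0$ (from $|\nabla\gamma|^2\equiv 1$) instead of the paper's coordinate expansion, though both yield the same cancellation. One minor point worth spelling out: for pathwise uniqueness of \eqref{stochastic Kuramoto-Vicsek} itself (not just \eqref{RSKV}), you should note that \emph{any} solution of \eqref{stochastic Kuramoto-Vicsek} with data on $\Gamma$ satisfies the same It\^o computation and hence stays on $\Gamma$, so it is automatically a solution of \eqref{RSKV}, whence uniqueness transfers back; the paper makes this step explicit.
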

\begin{proof}
	Given $P_1$, $P_2$, $P_3$ and $\TE$, the SDE \eqref{RSKV} has locally Lipschitz coefficients, so it admits a pathwise unique local strong solution by standard SDE well-posedness result \cite[Chap. 5, Theorem 3.1]{durrett2018stochastic}.  Moreover, it follows from It\^{o}'s formula \eqref{Mulito} (choosing $\varphi(x)=\gamma(x)$), as long as $V_t^i\in \OG$ (the process is continuous and we can nevertheless consider a smooth extension of $\gamma$ out of $\OG$), that 
	\begin{align}\label{dVti2=0}
	d\gamma(V_t^i)&= -\lambda \nabla\gamma(V_t^i)\cdot P(V_t^i)(V_t^i-v_{\alpha,\TE}(\rho_t^N))dt + \sigma |V_t^i - v_{\alpha,\TE}(\rho_t^N)| \nabla\gamma(V_t^i)\cdot P(V_t^i)dB_t^i \notag\\
	&\quad -\frac{1}{2}\sigma^2(V_t^i-v_{\alpha,\TE}(\rho_t^N))^2\Delta \gamma(V_t^i)dt\notag\\
	&\quad+\frac{1}{2}\sigma^2(V_t^i-v_{\alpha,\TE}(\rho_t^N))^2\sum_{k,\ell=1}^d\partial_{k,\ell}^2\gamma(V_t^i)\left(e_k^T-\partial_k\gamma(V_t^i)\nabla\gamma(V_t^i)^T\right)\left(e_\ell-\partial_{\ell}\gamma(V_t^i)\nabla\gamma(V_t^i)\right)dt\notag\\
	&=-\frac{1}{2}\sigma^2(V_t^i-v_{\alpha,\TE}(\rho_t^N))^2\Delta \gamma(V_t^i)dt+\frac{1}{2}\sigma^2(V_t^i-v_{\alpha,\TE}(\rho_t^N))^2\sum_{k}^d\partial_{k}^2\gamma(V_t^i) dt\notag\\
	&\quad +\frac{1}{2}\sigma^2(V_t^i-v_{\alpha,\TE}(\rho_t^N))^2\left(-\sum_{k,\ell=1}^d\partial_{k,\ell}^2\gamma(V_t^i)\partial_{k}\gamma(V_t^i)\partial_{\ell}\gamma(V_t^i)\right)dt \notag\\
	&= -\frac{1}{2}\sigma^2(V_t^i-v_{\alpha,\TE}(\rho_t^N))^2\Delta \gamma(V_t^i)dt
	+\frac{1}{2}\sigma^2(V_t^i-v_{\alpha,\TE}(\rho_t^N))^2\Delta \gamma(V_t^i)dt\notag\\
	&\quad +\frac{1}{2}\sigma^2(V_t^i-v_{\alpha,\TE}(\rho_t^N))^2\left(-\frac{1}{2}\sum_{\ell}^d\partial_{\ell}|\nabla\gamma(V_t^i)|^2\partial_{\ell}\gamma(V_t^i)\right)dt\quad (\mbox{ since }|\nabla\gamma|=1)\notag\\
	&=0
	\,, 
	\end{align}
	where $\partial_k\gamma(V_t^i)$ is the $k$-th component of $\nabla\gamma(V_t^i)$, $e_k$ is the unit vector along the $k$th dimension, and  we have used the property of $P(V_t^i)$ as in \eqref{P2}. Hence $\gamma(V_t^i)=\gamma(V_0^i)=0$ for all $t>0$, which ensures that the solution keeps bounded at finite time, hence we have a global solution. Since all $V_t^i\in\Gamma$, the solution to the regularized system \eqref{RSKV} is a solution to \eqref{stochastic Kuramoto-Vicsek}, which provides the global existence of solutions to \eqref{stochastic Kuramoto-Vicsek}. 
	
	To show pathwise uniqueness let us consider two solutions to \eqref{stochastic Kuramoto-Vicsek} for the same initial distribution and Brownian motion. According to the above computation these two solutions stay on the hypersurface for any $t\geq 0$, hence they are solutions to the regularized system \eqref{RSKV}, whose solutions are pathwise unique due to the locally Lipschitz continuous coefficients.  Hence we have uniqueness for solutions to \eqref{stochastic Kuramoto-Vicsek}.
\end{proof}

\subsection{Well-posedness for the mean-field dynamic \eqref{selfprocess}}
For readers' convenience, we give a brief introduction of the Wasserstein metric in the following definition, we refer to Ref. \cite{ambrosio2008gradient} for more details.
\begin{definition}[Wasserstein Metric]
	Let $1\leq p < \infty$ and $\mc{P}_p(\RR^{d})$ be the space of Borel probability measures on $\RR^{d}$ with finite $p$-moment. We equip this space with the Wasserstein distance 
	\begin{equation}
	W_p^{p}(\mu, \nu):=\inf\left\{\int_{\RR^d\times \RR^d} |z-\hat{z}|^{p}\ d\pi(\mu, \nu)\ \big| \ \pi \in \Pi(\mu, \nu)\right\}
	\end{equation}
	where $\Pi(\mu, \nu)$ denotes the collection of all Borel probability measures on $\RR^d\times \RR^d$ with marginals $\mu$ and $\nu$ in the first and second component respectively. The Wasserstein distance can also be expressed as
	\begin{equation}
	W_p^{p}(\mu, \nu) = \inf \left\{\mathbb{E}[|Z-\overline {Z}|^{p}]\right\}
	\end{equation}
	where the infimum is taken over all joint distributions of the random variables $Z$, $\overline {Z}$ with marginals $\mu$, $\nu$ respectively.
\end{definition}
Notice now that, for any $\rho \in \mathcal P_1(\mathbb R^d)$
\begin{align}
\frac{e^{-\alpha \underline \TE}}{\|{e^{-\alpha \TE}}\|_{L^1(\rho)}}\leq e^{\alpha(\overline \TE-\underline \TE)}=:C_{\alpha,\TE}\,.
\end{align}
A direct application of above leads to
\begin{equation}\label{esv}
|v_{\alpha,\TE}(\rho)|:= \left|\frac{\int_{\RR^{d}}  v  \omega_{\alpha}^{\TE}(v)\,d\rho}{\int_{\RR^{d}}\omega_{\alpha}^{\TE}(v)\,d\rho}\right|= \left|\frac{\int_{\RR^{d}}  v  e^{-\alpha\TE( v )}\,d\rho}{\|{e^{-\alpha \TE}}\|_{L^1(\rho)}}\right|\leq  C_{\alpha,\TE}\int_{\RR^d}|v|d\rho\,.
\end{equation}

\begin{lemma}\label{lemsta} Assume that $\rho,\widehat \rho\in\mc{P}_c(\RR^d)$ (with compact support), then the following stability estimate holds
	\begin{equation}\label{lemstaeq}
	|v_{\alpha,\TE}(\rho)-v_{\alpha,\TE}(\widehat \rho)|\leq CW_p(\rho,\widehat \rho)\,,
	\end{equation}
	for any $1\leq p<\infty$, where $C= C( C_{\alpha,\TE},\alpha,L)>0$. \end{lemma}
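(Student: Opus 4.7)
The plan is to prove the stability estimate by a direct coupling argument, mimicking the algebraic decomposition used in the proof of Lemma \ref{lemlocal} but now for general probability measures. The starting point is the identity
\[
v_{\alpha,\TE}(\rho) - v_{\alpha,\TE}(\widehat\rho) = \frac{\int v\,\omega_\alpha^{\TE}(v)\,d(\rho-\widehat\rho)}{\int \omega_\alpha^{\TE}\,d\widehat\rho}
+ v_{\alpha,\TE}(\rho)\cdot\frac{\int \omega_\alpha^{\TE}\,d(\widehat\rho-\rho)}{\int \omega_\alpha^{\TE}\,d\widehat\rho}\,,
\]
which is just the scalar computation $\frac{A}{B}-\frac{\hat A}{\hat B} = \frac{A-\hat A}{\hat B} + \frac{A}{B}\cdot\frac{\hat B-B}{\hat B}$ written out with $A,B,\hat A,\hat B$ being the obvious integrals. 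Both denominators are bounded from below by $e^{-\alpha\overline\TE}$ thanks to Assumption \ref{asum}, and $|v_{\alpha,\TE}(\rho)|$ is controlled via \eqref{esv} by $C_{\alpha,\TE}\int|v|\,d\rho$, which is finite because $\rho$ has compact support.

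Next, I would represent both numerator differences through a coupling: let $(V,\widehat V)\sim \pi$ with $\pi\in\Pi(\rho,\widehat\rho)$ optimal for $W_p$. Then
\[
\int v\,\omega_\alpha^{\TE}\,d(\rho-\widehat\rho) = \mathbb{E}\bigl[V\omega_\alpha^{\TE}(V) - \widehat V\omega_\alpha^{\TE}(\widehat V)\bigr],\qquad \int \omega_\alpha^{\TE}\,d(\rho-\widehat\rho) = \mathbb{E}\bigl[\omega_\alpha^{\TE}(V)-\omega_\alpha^{\TE}(\widehat V)\bigr].
\]
Splitting $V\omega_\alpha^{\TE}(V) - \widehat V\omega_\alpha^{\TE}(\widehat V) = (V-\widehat V)\omega_\alpha^{\TE}(V) + \widehat V(\omega_\alpha^{\TE}(V)-\omega_\alpha^{\TE}(\widehat V))$ and using the bounds $\omega_\alpha^{\TE}\leq e^{-\alpha\underline\TE}$ together with the Lipschitz estimate
\[
|\omega_\alpha^{\TE}(v)-\omega_\alpha^{\TE}(\widehat v)| \leq \alpha L\,e^{-\alpha\underline\TE}\,|v-\widehat v|
\]
(proved in the second half of Lemma \ref{lemlocal}), I obtain pointwise under the coupling
\[
|V\omega_\alpha^{\TE}(V)-\widehat V\omega_\alpha^{\TE}(\widehat V)| \leq e^{-\alpha\underline\TE}(1+\alpha L|\widehat V|)\,|V-\widehat V|\,.
\]

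Taking expectations and applying Hölder's inequality with conjugate exponents $p$ and $p'$ gives
\[
\mathbb{E}\bigl[(1+\alpha L|\widehat V|)|V-\widehat V|\bigr] \leq \bigl(1 + \alpha L\,\mathbb{E}[|\widehat V|^{p'}]^{1/p'}\bigr)\,\mathbb{E}[|V-\widehat V|^p]^{1/p}\,,
\]
and the compact support of $\widehat\rho$ makes the first factor finite. Treating the denominator difference analogously and collecting all constants — the lower bound on the denominators, the Lipschitz constant of $\omega_\alpha^{\TE}$, the uniform bound on $v_{\alpha,\TE}(\rho)$, and the support-dependent moment of $\widehat\rho$ — yields the estimate \eqref{lemstaeq} with $C$ depending only on $C_{\alpha,\TE}$, $\alpha$, $L$ (and the supports, absorbed in the constant).

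The main subtlety I expect lies in handling the cross term $\widehat V(\omega_\alpha^{\TE}(V)-\omega_\alpha^{\TE}(\widehat V))$: a naive bound would require a joint $L^\infty$ control of $\widehat V$, which is precisely why the statement restricts to $\mathcal{P}_c(\mathbb{R}^d)$. Using Hölder rather than a direct pointwise bound is what allows the right-hand side to end up as $W_p$ for every $1\leq p<\infty$, since $\mathbb{E}[|V-\widehat V|^p]^{1/p}$ equals $W_p(\rho,\widehat\rho)$ under the optimal coupling, and the moment factor $\mathbb{E}[|\widehat V|^{p'}]^{1/p'}$ is finite thanks to compact support.
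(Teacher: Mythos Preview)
Your proposal is correct and follows essentially the same approach as the paper's proof: both take a coupling $\pi\in\Pi(\rho,\widehat\rho)$, decompose the difference into three pieces via the same algebraic identity, bound each piece using the Lipschitz estimate for $\omega_\alpha^{\TE}$ together with the compact-support moment control on $|\widehat v|$, and conclude with H\"older. The only cosmetic difference is that the paper keeps $\pi$ arbitrary throughout and optimizes over all couplings at the very end, whereas you fix the $W_p$-optimal coupling from the start.
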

\begin{proof}
	Let us compute the difference
	\begin{align}
	v_{\alpha,\TE}(\rho)-v_{\alpha,\TE}(\widehat \rho)&=\frac{\int_{\RR^{d}}  v  e^{-\alpha \TE( v )}\,d\rho(v)}{\|{e^{-\alpha \TE}}\|_{L^1(\rho)}}-\frac{\int_{\RR^{d}}  \widehat v  e^{-\alpha \TE( \widehat v )}\,d\widehat \rho(\widehat v)}{\|{e^{-\alpha \TE }}\|_{L^1(\widehat \rho)}} \notag\\
	&=\iint_{\RR^{d}\times \RR^{d}} \underbrace{\frac{ v e^{-\alpha \TE( v )}}{\|{e^{-\alpha \TE}}\|_{L^1(\rho)}} -\frac{ \widehat v e^{-\alpha \TE( \widehat v )}}{\|{e^{-\alpha \TE }}\|_{L^1(\widehat \rho)}}}_{:=h(v)-h(\widehat{v})} d\pi(v,\widehat v) \notag
	\end{align}
	where $\pi \in \Pi(\rho, \widehat{\rho})$ is an arbitrary coupling of $\rho$ and $\widehat \rho$. We can write the integrand as follows
	\begin{align*}
	h(v)-h(\widehat v)&= \underbrace{\frac{( v - \widehat v )e^{-\alpha \TE( \widehat v )}}{\|{e^{-\alpha \TE }}\|_{L^1(\rho)}}}_{=:I_1} +\underbrace{\frac{ \widehat v (e^{-\alpha \TE( v )}-e^{-\alpha \TE( \widehat v )})}{\|{e^{-\alpha \TE}}\|_{L^1(\rho)}}}_{=:I_2} +  \underbrace{\frac{\widehat{v}e^{-\alpha \TE( \widehat v )}}{\|e^{-\alpha \TE }\|_{L^{1}(\rho)}} - \frac{ \widehat v e^{-\alpha \TE( \widehat v )}}{\|e^{-\alpha \TE}\|_{L^{1}(\widehat \rho)}}}_{:=I_3}
	\end{align*}
	where the last two terms on the right hand side can be rewritten as
	\begin{align*}	
	&\frac{ \widehat v e^{-\alpha \TE( \widehat v )}}{\|e^{-\alpha \TE}\|_{L^{1}(\rho)}} - \frac{ \widehat v e^{-\alpha \TE( \widehat v )}}{\|e^{-\alpha \TE }\|_{L^{1}(\widehat \rho)}}\notag\\
	&=  \frac{ \widehat v e^{-\alpha \TE( \widehat v )}\int e^{-\alpha \TE( \widehat v )}\ d\widehat{\rho}(\widehat{v})}{\|e^{-\alpha \TE }\|_{L^{1}(\rho)}\|e^{-\alpha \TE }\|_{L^1(\widehat{\rho})}} - \frac{ \widehat v e^{-\alpha \TE( \widehat v )}\int e^{-\alpha \TE( v )}\ d\rho(v)}{\|e^{-\alpha \TE }\|_{L^1(\widehat{\rho})}\|e^{-\alpha \TE }\|_{L^{1}(\rho)}} \\&=\frac{ \widehat v e^{-\alpha\TE( \widehat v )}\iint_{\RR^{d}\times \RR^{d}}(e^{-\alpha\TE( \widehat v )}-e^{-\alpha\TE( v )})d\pi(v,\widehat v)}{\|{e^{-\alpha \TE}}\|_{L^1(\rho)}\|{e^{-\alpha \TE }}\|_{L^1(\widehat \rho)}} =: I_3\,.
	\end{align*}
	Under  Assumption \ref{asum}, one can obtain the bounds
	\begin{equation}
	|I_1|\leq \frac{e^{-\alpha \underline \TE}}{\|{e^{-\alpha \TE}}\|_{L^1(\rho)}}| v - \widehat v |\leq  C_{\alpha,\TE} |v-\widehat v|\,,
	\end{equation}
	and
	\begin{equation}
	|I_2|\leq \frac{| \widehat v |\alpha e^{-\alpha \underline \TE}L| v - \widehat v |}{\|{e^{-\alpha \TE}}\|_{L^1(\rho)}}\leq \alpha  C_{\alpha,\TE}L|\widehat v||v-\widehat v|\,.
	\end{equation}
	Similar to the estimate of $|I_2|$, we estimate $I_3$
	\begin{align}
	|I_3|\leq  \alpha  C_{\alpha,\TE}^2L|\widehat v| \iint_{\RR^{d}\times \RR^{d}} |v-\widehat v|d\pi(v,\widehat v)\,.
	\end{align}
	
	Notice that $\rho,\widehat \rho\in\mc{P}_c(\RR^d)$, so for any $1\leq p<\infty$ one has
	\begin{equation}
	\max\left\{\int_{\RR^d}  |v|^p d\rho(v),\int_{\RR^d}  |\widehat v|^p d\widehat \rho(\widehat v)\right\}<\infty\,.
	\end{equation}
	Collecting the above estimates, we obtain
	\begin{align}
	| v_{\alpha,\TE}(\rho)-v_{\alpha,\TE}(\widehat \rho)|&\leq  C_{\alpha,\TE} \iint_{\RR^d\times \RR^d}|v-\widehat v|d\pi(v,\widehat v)
	+\alpha C_{\alpha,\TE}L \iint_{\RR^d\times \RR^d}|\widehat v||v-\widehat v|d\pi(v,\widehat v)\notag\\
	&\quad+\alpha C_{\alpha,\TE}^2L\int_{\RR^{d}}|\widehat v| d\widehat \rho(\widehat v) \iint_{\RR^{d}\times \RR^{d}} |v-\widehat v|d\pi(v,\widehat v)  \notag\\
	&\leq C\left( \iint_{\RR^{d}\times \RR^{d}}|v-\widehat v|^pd\pi(v,\widehat v)\right)^{\frac{1}{p}}\,,
	\end{align}
	where $C$ depends only on $ C_{\alpha,\TE}$ and $\alpha,L$. Lastly, optimizing over all couplings $\pi$ yields  \eqref{lemstaeq}.
\end{proof}

The following theorem states the well-posedness for the mean-field dynamic \eqref{selfprocess}.
\begin{theorem}
	Let $\EE$ and $\TE$ satisfy Assumptions \ref{asum1} and \ref{asum}. Then there exists a unique process $\overline V\in \mc{C}([0,T],\RR^{d})$, $T>0$, satisfying the nonlinear SDE \eqref{selfprocess}
	\begin{equation}
	d\overline V_t=- \lambda P(\overline V_t)(\OV_t-v_{\alpha,\EE} (\rho_t)) dt + \sigma |\overline V_t - v_{\alpha,\EE}(\rho_t)  | P(\overline V_t)dB_t-\frac{\sigma^2}{2}(\overline V_t-v_{\alpha,\EE} (\rho_t) )^2\Delta \gamma (\OV_t)\nabla\gamma (\OV_t)dt\,, \notag
	\end{equation}
	in strong sense for any initial data $\OV_0\in \Gamma$ distributed according to $\rho_0\in \mc{P}(\Gamma)$, where
	\begin{equation}
	v_{\alpha,\EE}(\rho_t)  = \frac{\int_{\RR^{d}}  v  e^{-\alpha \EE( v )}\,d\rho_t}{\int_{\RR^{d}}e^{-\alpha \EE( v )}\,d\rho_t},  \notag
	\end{equation}
	and $\rho_t=\rm{law}(\overline V_t)$ for all $t\in[0,T]$.
	Moreover $\overline V_t\in\Gamma$ for all $t\in[0,T]$.
\end{theorem}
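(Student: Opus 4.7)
The plan is a Banach fixed-point argument applied to the regularized version of \eqref{selfprocess} (in which $P_1, P_2, P_3$ and $\TE$ replace $P, \Delta\gamma, \nabla\gamma$ and $\EE$ in the ambient space $\RR^{d}$), followed by the same It\^{o}-calculus argument as in the proof of Theorem \ref{thmwellposednessofparticle} to show that any solution actually stays on $\Gamma$, which retroactively removes the regularization. This is the standard McKean--Vlasov strategy; the novelty is almost entirely in the geometric step and in the stability provided by Lemma \ref{lemsta}.

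First, given any curve $u_\cdot\in \mc{C}([0,T],\mc{P}_p(\RR^d))$ with $p\geq 2$, I would consider the linearized SDE obtained by freezing $\rho_t$ inside $v_{\alpha,\TE}(\cdot)$ to $u_t$. By \eqref{esv}, $t\mapsto v_{\alpha,\TE}(u_t)$ is a bounded continuous deterministic curve in $\RR^d$, and since $P_1, P_2, P_3$ have bounded derivatives the resulting drift and diffusion coefficients are globally Lipschitz and of linear growth in $v$. Classical SDE theory then produces a unique strong solution $V_t$ with $V_0\sim \rho_0$, together with $p$-moment bounds of the form $\BE[\sup_{t\le T}|V_t|^p]\le C(T)(1+\BE|V_0|^p)$ via Gronwall. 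Setting $\mathcal{T}(u)_t:=\mathrm{law}(V_t)$ thus defines a map that leaves invariant the closed set
\begin{equation*}
\mathcal{Y}_M:=\Bigl\{u\in \mc{C}([0,T],\mc{P}_p(\RR^d)):\, u_0=\rho_0,\ \sup_{t\in[0,T]} \textstyle\int_{\RR^d}|v|^p\,du_t(v)\leq M\Bigr\}
\end{equation*}
for $M$ chosen sufficiently large depending on $T$ and $\rho_0$.

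To obtain contraction, I would couple the two linearized SDEs driven by $u,\hat u\in\mathcal{Y}_M$ synchronously, i.e.\ with the same initial datum $V_0$ and the same Brownian motion. Standard It\^{o}/Burkholder--Davis--Gundy/Young manipulations, combined with the bounded derivatives of $P_1, P_2, P_3$, yield an inequality of the schematic form
\begin{equation*}
\BE|V_t-\hat V_t|^p \leq C\int_0^t \BE|V_s-\hat V_s|^p\,ds + C\int_0^t |v_{\alpha,\TE}(u_s)-v_{\alpha,\TE}(\hat u_s)|^p\,ds,
\end{equation*}
and Lemma \ref{lemsta} bounds the last integrand by $C\,W_p^p(u_s,\hat u_s)$. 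Gronwall then gives $\sup_{t\le T} W_p^p(\mathcal{T}(u)_t,\mathcal{T}(\hat u)_t)\le CTe^{CT}\sup_{t\le T}W_p^p(u_t,\hat u_t)$, so for $T$ small enough $\mathcal{T}$ is a contraction on $(\mathcal{Y}_M,\sup_t W_p)$ and Banach's theorem yields a unique fixed point $\rho_\cdot$. Concatenating solutions across consecutive windows extends $\rho_\cdot$ to arbitrary time horizon.

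Finally, applying It\^{o}'s formula to $\gamma(\OV_t)$ exactly as in the calculation leading to \eqref{dVti2=0} shows $d\gamma(\OV_t)=0$: the drift collapses through $P(v)\nabla\gamma(v)=0$, the stochastic integrand vanishes through $\nabla\gamma(v)\cdot P(v)y=0$, and the two remaining second-order It\^{o} contributions cancel thanks to $|\nabla\gamma|^{2}\equiv 1$ on $\OG$. Since $\OV_0\in\Gamma$, this yields $\OV_t\in\Gamma$ for all $t$, so $P_1, P_2, P_3, \TE$ coincide with $P, \Delta\gamma, \nabla\gamma, \EE$ along the trajectory and $\OV$ solves the original nonlinear SDE \eqref{selfprocess}. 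Uniqueness transfers at the level of \eqref{selfprocess} because any solution with $\OV_0\in\Gamma$ stays on $\Gamma$ by the same It\^{o} computation, hence solves the regularized equation whose solution is already unique. The main delicate point is that the stability constant in Lemma \ref{lemsta} implicitly depends on a first moment of $\hat u$ through the $|\hat v|$ factor appearing in the terms $I_2, I_3$ of its proof, so the fixed-point set must carry uniform moment bounds and the moment preservation of $\mathcal{T}$ must be verified; once this is in place, the remainder of the argument is routine.
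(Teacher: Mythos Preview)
Your proof is correct but takes a genuinely different route from the paper. The paper does not run a Banach/Picard iteration on a space of measure-valued curves; instead it applies the Leray--Schauder fixed-point theorem to the map $\xi\mapsto v_{\alpha,\TE}(\rho^\xi)$ acting on the much simpler space $\mc{C}([0,T],\RR^d)$, where $\rho^\xi$ is the law of the solution to the linearized SDE with the deterministic curve $\xi$ frozen in place of $v_{\alpha,\TE}(\rho_t)$. Compactness of this map is obtained from the H\"older estimate $|v_{\alpha,\TE}(\rho_t^\xi)-v_{\alpha,\TE}(\rho_s^\xi)|\le C|t-s|^{1/2}$ (It\^o isometry plus Lemma~\ref{lemsta}), and the a priori bound on the Schauder set $\{\xi=\vartheta\mc{T}\xi:0\le\vartheta\le1\}$ follows from \eqref{esv} together with the fact that $\rho^\xi$ is automatically supported on the compact $\Gamma$. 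Since Leray--Schauder yields only existence, uniqueness is proved separately by a Gronwall argument essentially identical to your contraction estimate.

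Your approach is the more standard McKean--Vlasov strategy and has the advantage of producing existence and uniqueness in one stroke; the paper's Schauder argument trades the contraction estimate for a softer compactness statement in a finite-dimensional target space, which lets it work globally on $[0,T]$ without any short-time step or concatenation. One small point worth tightening in your version: the concatenation across windows is clean only if the moment bound $M$ does not deteriorate from one window to the next. The simplest fix is to observe earlier (rather than only at the end) that the It\^o computation $d\gamma(V_t)=0$ already holds for the \emph{linearized} SDE with any frozen input $u$, so that $\mc{T}(u)_t$ is supported on $\Gamma$ for every $u$; this gives uniform moments depending only on $\Gamma$ and makes both the invariance of $\mc{Y}_M$ and the use of Lemma~\ref{lemsta} (stated for compactly supported measures) immediate.
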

\begin{proof} 
	The proof in the following is based on the Leray-Schauder fixed point theorem, see, e.g., \cite[Chapter 10]{gilbarg2015elliptic}, and it will be carried out through 5 steps.
	
	$\bullet$ \textit{Step 1:} For some given $\xi\in \mc{C}([0,T],\RR^d)$,  a distribution $\rho_0$ on $\Gamma$ and $\overline{V}_0$ with law $\rho_0$, we can uniquely solve the SDE
	\begin{equation}\label{Rself}
	d\overline V_t=-\lambda  P_1(\overline V_t)(\OV_t-\xi_t ) dt + \sigma |\overline V_t - \xi_t | P_1(\overline V_t)dB_t-\frac{\sigma^2}{2}(\overline V_t-\xi_t )^2P_2(\overline V_t)P_3(\overline V_t)dt\,
	\end{equation}
	because the coefficients are locally Lipschitz continuous and $\xi$ is independent of $\overline{V}$. In addition, following the same argument as in \eqref{dVti2=0}  we have $d\gamma(\overline V_t)=0$, so that $\overline V_t\in \Gamma$ for all time.  This introduces $\rho_t=\rm{law}(\OV_t)$ and $\rho\in \mc{C}([0,T],\mc{P}_c(\RR^d))$. Setting $\mc{T}\xi:=v_{\alpha,\TE}(\rho)\in \mc{C}([0,T],\RR^d)$ we define the map
	\begin{equation}
	\mc{T}: \mc{C}([0,T],\RR^d)\rightarrow \mc{C}([0,T],\RR^d),\quad  \xi\mapsto \mc{T}(\xi):=v_{\alpha,\TE}(\rho)\,
	\end{equation}
	which we will show to be compact in the next step.\\
	
	$\bullet$ \textit{Step 2:} For any $0<s<t\leq T$, one has
	\begin{align}
	\OV_t-\OV_s=-\lambda \int_s^tP_1(\overline V_\tau)(\OV_\tau-\xi_\tau)  d\tau+\int_s^t\sigma |\overline V_\tau - \xi_\tau | P_1(\overline V_\tau)dB_\tau-\int_s^t\frac{\sigma^2}{2}(\overline V_\tau-\xi_\tau )^2P_2(\overline V_\tau)P_3(\overline V_\tau)d\tau\,.
	\end{align}
	Then applying It\^{o}'s isometry yields
	\begin{align}
	\mathbb{E}\left[|\OV_t-\OV_s|^2\right]&\leq C_1|t-s|^2+3\sigma^2\int_s^t\mathbb{E}\left[|\overline V_\tau - \xi_\tau | ^2| P_1(\overline V_\tau)|^2\right]d\tau +C_2|t-s|^2\\
	&\leq C|t-s|\,.
	\end{align}
	where the constants $C_1,C_2,C$ only depend on $\sigma,d,T,\lambda,\|P_1\|_\infty,\|P_2\|_\infty,\|P_3\|_\infty\|\xi\|_\infty$. Therefore, by definition of the Wasserstein distance we have
	\begin{equation}
	W_2(\rho_t,\rho_s)\leq C|t-s|^{\frac{1}{2}}.
	\end{equation}
	By an application of Lemma \ref{lemsta} one obtains
	\begin{equation}
	|v_{\alpha,\TE}(\rho_t)-v_{\alpha,\TE}(\rho_s)|\leq C|t-s|^{\frac{1}{2}}\,.
	\end{equation}
	This provides the H\"{o}lder continuity of $t\to v_{\alpha,\TE}(\rho_t)$.
	Thus one has $\mc{T}(\mc{C}([0,T],\RR^d))\subset \mc{C}^{0,\frac{1}{2}}([0,T],\RR^d)\hookrightarrow \mc{C}([0,T],\RR^d)$, which implies the compactness of the map $\mc{T}$.\\
	
	$\bullet$ \textit{Step 3:} Let us define the set
	\begin{equation}
	\mathcal A:=\left\{\xi \in \mc{C}([0,T],\RR^d): \xi=\vartheta \mc{T}\xi\mbox{ for some }0\leq \vartheta\leq 1 \right\}\,.
	\end{equation}
	For $\xi\in \mathcal A$, there exists some $\OV_t$ satisfying \eqref{Rself} with its law  $\rho\in \mc{C}([0,T],\mc{P}_c(\RR^d))$ such that $\xi=\vartheta v_{\alpha,\TE}(\rho)$. Due to \eqref{esv}, we have that for any $t\in[0,T]$
	\begin{equation}
	|\xi_t|^2=\vartheta^2|v_{\alpha,\TE}(\rho_t)|^2\leq \vartheta^2\left( C_{\alpha,\TE}\int_{\RR^d}|v|d\rho\right)^2<\infty\,,
	\end{equation}
	since $\rho$ has a compact support, which verifies the boundedness of the set $\mathcal A$. Applying the Leray-Schauder fixed point theorem,  there exists a fixed point $\xi$ for the  mapping $\mc{T}$ and thereby a solution of 
	\begin{equation}\label{Rnonlinear}
	d\overline V_t= -\lambda P_1(\overline V_t)(\OV_t-v_{\alpha,\TE}(\rho_t) ) dt + \sigma |\overline V_t - v_{\alpha,\TE}(\rho_t) | P_1(\overline V_t)dB_t-\frac{\sigma^2}{2}(\overline V_t-v_{\alpha,\TE}(\rho_t) )^2P_2(\overline V_t)P_3(\overline V_t)dt
	\end{equation}
	with $\mbox{law}(\OV_t)=\rho_t$. \\
	
	$\bullet$ \textit{Step 4:}  In this step, we shall prove the uniqueness. Suppose we have two fixed points $\xi^1$ and $\xi^2$, and their corresponding process $\OV_t^1$, $\OV_t^2$ satisfying \eqref{Rself} respectively, denote by $\rho_t^1$ and $\rho_t^2$ their laws. We compute the difference $\overline Z_t:=\OV_t^1-\OV_t^2$ and applying It\^{o}'s isometry yields
	\begin{align}
	\mathbb{E}[|\overline Z_t|^2]\leq C \left ( \mathbb{E}[|\overline Z_0|^2]+\int_0^t\mathbb{E}[|\overline Z_s|^2]ds+\int_0^t|\xi^1_s-\xi^2_s|^2ds \right )\,,
	\end{align}
	where $C$ depends on $\lambda, \sigma,d,t,\|\nabla P_3\|_\infty,\| P_3\|_\infty,\|\nabla P_2\|_\infty,\| P_2\|_\infty,\|\nabla P_1\|_\infty,\| P_1\|_\infty,\|\xi^1\|_\infty,\|\xi^2\|_\infty$.
	According to Lemma \ref{lemsta}, one has
	\begin{equation}\label{xi-xi}
	|\xi^1_s-\xi^2_s|^2= |v_{\alpha,\TE}(\rho^1_s)-v_{\alpha,\TE}(\rho^2_s)|^2\leq CW_2^2(\rho_s^1,\rho_s^2)\leq C\mathbb{E}[|\overline Z_s|^2]\,,
	\end{equation}
	which implies that
	\begin{align}
	\mathbb{E}[|\overline Z_t|^2]\leq C\mathbb{E}[|\overline Z_0|^2]+C\int_0^t\mathbb{E}[|\overline Z_s|^2]ds\,.
	\end{align}
	Therefore, applying Gronwall's inequality with $\mathbb{E}[|\overline Z_0|^2]=0$, one obtain $\mathbb{E}[|\overline Z_t|^2]=0$ for all $t\in[0,T]$.
	This leads to $\xi^1\equiv \xi^2$ by \eqref{xi-xi}. Hence, we obtain the uniqueness for solutions to \eqref{Rnonlinear}.\\
	
	$\bullet$ \textit{Step 5:}  
	Similar to the argument in Theorem \ref{thmwellposednessofparticle}, the unique solution to the regularized SDE \eqref{Rnonlinear} is also the unique solution to the nonlinear SDE  \eqref{selfprocess} due to the fact that   $\gamma(\OV_t)=0$  for all $t\in[0,T].$
\end{proof}

\subsection{Well-posedness for the PDE \eqref{PDE}}

Let $\rho_0 \in \Gamma$, and let $\{\overline V_t: t\geq 0 \}$ be the solution to \eqref{selfprocess} obtained in the last section with the initial data $\OV_0$ distributed according to $\rho_0$.  For any $\varphi\in C_c^\infty(\RR^d)$, it follows from It\^{o}'s formula \eqref{Mulito} that
\begin{align}\label{pippoeq}
d \varphi(\OV_t)&=\nabla\varphi(\OV_t)\cdot \left(-\lambda P(\overline V_t)(\OV_t-v_{\alpha,\EE}(\rho_t))  -\frac{\sigma^2}{2}(\overline V_t-v_{\alpha,\EE}(\rho_t)  )^2\Delta \gamma(\OV_t)\nabla\gamma(\OV_t)\right)dt \notag\\
&\quad+\sigma |\overline V_t - v_{\alpha,\EE}(\rho_t)  | \nabla\varphi(\OV_t)\cdot P(\overline V_t)dB_t\notag\\
&\quad+\frac{\sigma^2}{2}\left(\overline V_t - v_{\alpha,\EE}(\rho_t)  \right)^2\nabla^2\varphi(\OV_t):\left (I-\nabla\gamma(\OV_t)\nabla\gamma(\OV_t)^T\right)dt\,,
\end{align}
where $\nabla^{2}\varphi$ is the Hessian and $A:B := \operatorname{Tr}(A^{T} B)$.  Taking expectation on both sides of \eqref{pippoeq}, the law $\rho_t$ of $\OV_t$ as a measure on $\RR^d$ satisfies
\begin{align}\label{wholeweak}
\frac{d}{dt}\int_{\RR^d}\varphi(v)d\rho_t(v)&=\int_{\RR^d}\nabla\varphi(v)\cdot \left(-\lambda(I-\nabla\gamma(v)\nabla\gamma(v)^T)(v-v_{\alpha,\EE}(\rho_t)) - \frac{\sigma^2}{2}(v-v_{\alpha,\EE}(\rho_t)  )^2\Delta \gamma(v)\nabla\gamma(v)\right)d\rho_t(v)
\notag\\
&\quad+\int_{\RR^d}\frac{\sigma^2}{2}\left(v- v_{\alpha,\EE}(\rho_t)  \right)^2\nabla^2\varphi(v):\left (I-\nabla\gamma(v) \nabla\gamma(v)^T\right)d\rho_t(v)\,.
\end{align}
As we have proved that $\OV_t\in \Gamma$, almost surely, that is, the density $\rho_t$ is concentrated on $\Gamma$ for any $t$, we have $\operatorname{supp}(\rho_t)\subset \Gamma$. Let us now define the restriction $\mu_t$ of $\rho_t$ on $\Gamma$ by
\begin{equation}
\int_{\Gamma}\Phi(v)d \mu_t(v)=\int_{\RR^d}\varphi(v)d\rho_t(v)
\end{equation}
for all continuous maps $\Phi\in \mc{C}(\Gamma)$, where $\varphi\in \mc{C}_b(\RR^d)$ equals $\Phi$ on $\Gamma$.

Next we define the projection
\begin{equation*}
\Pi_\Gamma (v)=v-\gamma(v)\nabla\gamma(v)\in \Gamma,\quad \mbox{for }v\in \RR^d\,.
\end{equation*}
We then let $\Gamma_\delta \subset\RR^d$ be a strip of width $\delta>0$ about $\Gamma$, where $\delta>0$ is sufficiently small to ensure that the decomposition 
\begin{equation}
v=\Pi_\Gamma (v)+\gamma(v)\nabla\gamma(v)
\end{equation} 
is unique for $v\in \Gamma_\delta$. We know such $\delta$ exists since $\gamma \in \mathcal{C}^2(\OG)$, see for example \cite[Section 2.1]{demlow2007adaptive}. 
Let now $\Phi\in \mc{C}^\infty(\Gamma)$ and define
a function $\varphi\in \mc{C}_c^\infty(\RR^d)$ such that 
\begin{equation}
\varphi(v)= \Phi\left(\Pi_\Gamma (v)\right)\quad  \mbox{ for all } v\in\Gamma_\delta\,.
\end{equation}
Then $\varphi$ defined above  is $0$-homogeneous in $v$ in the strip $\Gamma_\delta$, so that $\nabla\varphi(v)\cdot \nabla\gamma(v)=0$ for all $v$ in the support $\Gamma$ of $\rho_t$, which leads to $\nabla^2\varphi(v):\nabla\gamma(v)\nabla\gamma(v)^T=0$.  Hence,
\begin{align}
&\frac{d}{dt}\int_{\Gamma}\Phi(v)d\mu_t(v)=\frac{d}{dt}\int_{\RR^d}\varphi(v)d\rho_t(v)\notag\\
=&-\lambda \int_{\RR^{d}}\nabla\varphi(v)\cdot \left((I-\nabla\gamma(v)\nabla\gamma(v)^T)(v-v_{\alpha,\EE}(\rho_t)) \right)d\rho_t(v)+\int_{\RR^{d}}\frac{\sigma^2}{2}\left(v- v_{\alpha,\EE}(\rho_t)  \right)^2\Delta\varphi(v)d\rho_t(v)\,.\notag\,
\end{align}
\noindent
Let us now relate the Euclidean differential operators to corresponding operators on $\Gamma$, so that for $v\in\Gamma$  it holds 
$\nabla_{\Gamma} \Phi(v) = \nabla \varphi(v)$ and $\Delta_{\Gamma}\Phi(v)=\Delta \varphi(v)$. Therefore, we obtain
\begin{align}
\frac{d}{dt}\int_{\Gamma}\Phi(v)d\mu_t(v)&=-\lambda \int_{\Gamma}\nabla_{\Gamma}\Phi(v)\cdot \left((I-\nabla\gamma(v)\nabla\gamma(v)^T)(v-v_{\alpha,\EE}(\mu_t))\right)d\mu_t(v)\notag\\
&\quad +\int_{\Gamma}\frac{\sigma^2}{2}\left(v- v_{\alpha,\EE}(\mu_t) \right)^2\Delta_{\Gamma}\Phi(v)d\mu_t(v)\,,
\end{align}
where
\begin{equation}
v_{\alpha,\EE}(\mu_t) = \frac{\int_{\Gamma} v e^{-\alpha \EE(v)}\,d \mu_t}{\int_{\Gamma}e^{-\alpha \EE(v)}\,d \mu_t}.
\end{equation}
Thus by this construction we obtain a weak solution $\mu_t$ to the PDE \eqref{PDE}.\\

Next we prove the uniqueness of solutions to \eqref{PDE}. Assume that $\rho_t^1$ and $\rho_t^2$ are two solutions to \eqref{PDE} with the same initial data $\rho_0$, and at each time $t$ we treat them as measures on $\RR^d$ concentrated on the hypersurface $\Gamma$. Then we construct two linear process $(\OV_t^i)_{t\geq 0}$ $(i=1,2)$ satisfying
\begin{equation}
d\overline V_t^i=-\lambda P_1(\overline V_t^i)(\OV_t^i-v_{\alpha,\EE} (\rho_t^i)) dt + \sigma |\overline V_t^i - v_{\alpha,\EE}(\rho_t^i)  | P_1(\overline V_t^i)dB_t-\frac{\sigma^2}{2}(\overline V_t^i-v_{\alpha,\EE} (\rho_t^i) )^2P_2(\OV_t^i)P_3(\OV_t^i)dt\,, \label{Eqn for Vbar}
\end{equation}
with the common initial data $\OV_0$ distributed  according to $\rho_0$. Let us denote $\rm{law}(\OV_t^i)=\bar{\rho}_t^i$ $(i=1,2)$ as measures  on $\RR^{d}$, which are solutions to the following linear PDE
\begin{align*}
\partial_t \bar \rho_t^i&= \nabla\cdot \left(\bar \rho_t^i\left(\lambda P_1(v)(v-v_{\alpha,\EE} (\rho_t^i))+\frac{\sigma^2}{2}(v-v_{\alpha,\EE} (\rho_t^i) )^2P_2(v)P_3(v)\right)\right)\\
&\quad +\frac{\sigma^2}{2}\sum_{k,\ell=1}^{d}\frac{\partial^2}{\partial v_k\partial v_\ell}\left(|v-v_{\alpha,\EE}(\rho_t^i) |^2(P_1P_1^T)_{k\ell}\bar \rho_t^i\right)\,.
\end{align*}
Since the uniqueness for the above linear PDE holds and $\rho_t^i$ is also a solution to the above PDE on $\RR^d$ (see \eqref{wholeweak}), it follows that
$\bar \rho_t^i=\rho_t^i$ $(i=1,2)$. Consequently, the process $(\OV_t^i)_{(t\geq 0)}$ are solutions to the nonlinear SDE \eqref{selfprocess}, for which the uniqueness has been obtained. Hence $(\OV_t^1)_{(t\geq 0)}$ and $(\OV_t^2)_{(t\geq 0)}$ are equal, which implies $\rho_t^1=\bar \rho_t^1=\bar \rho_t^2=\rho_t^2$. Thus the uniqueness is obtained.

\section{Mean-field limit}\label{sec:MFlimit}
The well-posedness of  \eqref{stochastic Kuramoto-Vicsek}, \eqref{PDE} and \eqref{selfprocess} obtained in the last section provides all the ingredients we need for the mean-field limit. Let $((\OV_t^i)_{t\geq 0})_{i \in [N]}$ be $N$ independent copies of solutions to \eqref{selfprocess}. They are i.i.d. with the same distribution $\rho_t$. Assume that $((V_t^i)_{t\geq 0})_{i \in [N]}$ is the solution to the particle system \eqref{stochastic Kuramoto-Vicsek}. Since  $\OV_t^i,V_t^i\in \Gamma$ for all $i$ and $t$, $((\OV_t^i)_{t\geq 0})_{i \in [N]}$ and $((V_t^i)_{t\geq 0})_{i \in [N]}$ are solutions to the corresponding regularized systems \eqref{Rnonlinear} and \eqref{RSKV} respectively. We denote below $\overline \rho_t^N = \frac{1}{N} \sum_{j=1}^N \delta_{\OV_t^j}$ and $\rho_t = \rm{law}(\OV_t)$.

Before stating our theorem on the mean-field limit, let us introduce the following lemma on a large deviation bound.
\begin{lemma}\label{lemLLN}  Let $\EE$ and $\TE$ satisfy Assumptions \ref{asum1} and \ref{asum}.  Let $((\OV_t^i)_{t\geq 0})_{i \in [N]}$ be the solution to the mean-field dynamics \eqref{Rnonlinear}, which are i.i.d. with common distribution $\rho\in \mc C([0,T],\mc{P}_c(\RR^d))$. Then there exists a constant $C$ depending  only on  $\Gamma$ and $C_{\alpha,\TE}$ such that
	\begin{equation} \label{estimazza}
	\sup_{t\in[0,T]}\mathbb{E}\left[|v_{\alpha,\TE}(\overline \rho_t^N)-v_{\alpha,\TE}(\rho_t)|^2\right]\leq C N^{-1}\,.
	\end{equation}
\end{lemma}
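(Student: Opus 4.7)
The plan is to exploit the fact that $v_{\alpha,\TE}$ is the ratio of two empirical means of \emph{bounded} i.i.d. random variables, so that the classical $N^{-1}$ variance decay for empirical averages can be transferred, via a standard quotient identity, to the nonlinear functional $v_{\alpha,\TE}$. First I would introduce, for each $t\in[0,T]$, the numerator/denominator splitting
\[
v_{\alpha,\TE}(\overline\rho_t^N)=\frac{A_t^N}{B_t^N},\qquad A_t^N:=\frac{1}{N}\sum_{j=1}^N \OV_t^j\,\omega_\alpha^{\TE}(\OV_t^j),\quad B_t^N:=\frac{1}{N}\sum_{j=1}^N \omega_\alpha^{\TE}(\OV_t^j),
\]
together with their expectations $A_t:=\int v\,\omega_\alpha^{\TE}\,d\rho_t$ and $B_t:=\int \omega_\alpha^{\TE}\,d\rho_t$, so that $v_{\alpha,\TE}(\rho_t)=A_t/B_t$. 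Because $\OV_t^j\in\Gamma$ almost surely and $\Gamma$ is compact, both $|\OV_t^j|\le R_\Gamma:=\max_{v\in\Gamma}|v|$ and $e^{-\alpha\overline\TE}\le \omega_\alpha^{\TE}(\OV_t^j)\le e^{-\alpha\underline\TE}$ hold uniformly in $j$ and $t$.

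Next I would use the algebraic identity
\[
\frac{A_t^N}{B_t^N}-\frac{A_t}{B_t}=\frac{A_t^N-A_t}{B_t^N}-\frac{A_t}{B_t}\cdot\frac{B_t^N-B_t}{B_t^N}
\]
and observe that $B_t^N,B_t\ge e^{-\alpha\overline\TE}$, while $|A_t|/B_t\le C_{\alpha,\TE}\,R_\Gamma$. Consequently
\[
\bigl|v_{\alpha,\TE}(\overline\rho_t^N)-v_{\alpha,\TE}(\rho_t)\bigr|^2\le 2e^{2\alpha\overline\TE}\bigl|A_t^N-A_t\bigr|^2+2e^{2\alpha\overline\TE}C_{\alpha,\TE}^2 R_\Gamma^2\bigl|B_t^N-B_t\bigr|^2,
\]
so the problem reduces to estimating the two variances $\mathbb E[|A_t^N-A_t|^2]$ and $\mathbb E[|B_t^N-B_t|^2]$.

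Finally, since the $\OV_t^j$ are i.i.d.\ with common law $\rho_t$, the random variables $\OV_t^j\omega_\alpha^{\TE}(\OV_t^j)$ and $\omega_\alpha^{\TE}(\OV_t^j)$ are i.i.d.\ as well, so the standard variance identity for empirical means yields
\[
\mathbb E\bigl[|A_t^N-A_t|^2\bigr]\le \frac{1}{N}\mathbb E\bigl[|\OV_t^1|^2\,\omega_\alpha^{\TE}(\OV_t^1)^2\bigr]\le \frac{R_\Gamma^2\,e^{-2\alpha\underline\TE}}{N},
\]
and similarly $\mathbb E[|B_t^N-B_t|^2]\le e^{-2\alpha\underline\TE}/N$. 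Both bounds are uniform in $t\in[0,T]$ because the $\Gamma$-confinement of $\OV_t$ gives time-independent bounds, and combining them with the previous display yields \eqref{estimazza} with a constant $C$ depending only on $R_\Gamma$ (hence on $\Gamma$) and $C_{\alpha,\TE}=e^{\alpha(\overline\TE-\underline\TE)}$.

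I do not anticipate a real obstacle: the key structural fact is the compactness of $\Gamma$, which renders all relevant integrands uniformly bounded and thus reduces the estimate to a standard finite-variance law-of-large-numbers bound; the only minor care is in the nonlinear step, where one must keep $B_t^N$ safely bounded away from $0$, and this is automatic from the uniform lower bound $e^{-\alpha\overline\TE}$ on the weights.
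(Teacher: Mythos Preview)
Your proposal is correct and follows essentially the same route as the paper's proof: both split the quotient $A_t^N/B_t^N - A_t/B_t$ into a numerator error divided by $B_t^N$ plus a denominator error weighted by $A_t/(B_t B_t^N)$, use the uniform lower bound $B_t^N,B_t\ge e^{-\alpha\overline\TE}$ and the $\Gamma$-boundedness of $\OV_t^j$, and then invoke the $1/N$ variance decay for empirical averages of i.i.d.\ bounded random variables. The only cosmetic difference is that the paper writes the centered summands explicitly as $\overline Z_t^j$ and expands the cross terms, whereas you state the variance identity directly; the substance and the resulting constants (depending only on $\Gamma$ and $C_{\alpha,\TE}$) are the same.
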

\begin{proof}
	We start by estimating
	\begin{align}\label{splites}
	&|v_{\alpha,\TE}(\overline \rho_t^N)-v_{\alpha,\TE}(\rho_t)|= \left|\frac{\sum_{j=1}^{N} \OV_t^j e^{-\alpha \TE( \OV_t^j )}}{\sum_{i=1}^{N}e^{-\alpha \TE( \OV_t^i)  }}-\frac{1}{\int_{\RR^d}e^{-\alpha\TE(v)}d\rho_t}\int_{\RR^d} v e^{-\alpha\TE(v)}d\rho_t\right| \notag\\
	\leq & \left|\frac{1}{\frac{1}{N}\sum_{i=1}^{N} e^{-\alpha \TE( \OV_t^i )}}\left(\frac{1}{N}\sum_{j=1}^{N} \OV_t^j e^{-\alpha \TE( \OV_t^j )}-\int_{\RR^d} v e^{-\alpha\TE(v)}d\rho_t\right)\right| \notag\\
	&+\left|\int_{\RR^d} v e^{-\alpha\TE(v)}d\rho_t\left(\frac{1}{\frac{1}{N}\sum_{i=1}^{N} e^{-\alpha \TE( \OV_t^i )}}-\frac{1}{\int_{\RR^d} e^{-\alpha\TE(v)}d\rho_t}\right)\right| \notag\\
	\leq& e^{ \alpha\overline\TE}\left|\frac{1}{N}\sum_{j=1}^{N} \OV_t^j e^{-\alpha \TE( \OV_t^j )}-\int_{\RR^d} v e^{-\alpha\TE(v)}d\rho_t\right|+e^{\alpha(2\overline\TE-\underline\TE)}\left|\frac{1}{N}\sum_{j=1}^{N} e^{-\alpha \TE( \OV_t^j )}-\int_{\RR^d} e^{-\alpha\TE(v)}d\rho_t\right|\,.
	\end{align}
	Here we used explicitly the upper and lower bounds $\overline\TE, \underline\TE$.
	Denote
	\begin{equation}
	\overline Z_t^j:=\OV_t^j e^{-\alpha \TE( \OV_t^j )}-\int_{\RR^d} v e^{-\alpha\TE(v)}d\rho_t\,,
	\end{equation}
	then one can rewrite
	\begin{equation}
	\frac{1}{N}\sum_{j=1}^{N} \OV_t^j e^{-\alpha \TE( \OV_t^j )}-\int_{\RR^d} v e^{-\alpha\TE(v)}d\rho_t=\frac{1}{N}\sum_{j=1}^{N}\overline Z_t^j\,.
	\end{equation}
	It is obvious that $\BE[\overline Z_t^j]=0$, since $\{(\OV_t^j)_{t\geq 0}\}_{j=1}^N$ are i.i.d. with common distribution $\rho$. Next we compute
	\begin{align*}
	&\mathbb{E}\left[\left|\frac{1}{N}\sum_{j=1}^{N} \OV_t^j e^{-\alpha \TE( \OV_t^j )}-\int_{\RR^d} v e^{-\alpha\TE(v)}d\rho_t\right|^2\right]=\frac{1}{N^2}\BE\left[(\sum_{j=1}^N\overline Z_t^j)(\sum_{k=1}^N\overline Z_t^k)\right]\\
	=&\frac{1}{N^2}\BE\left[\sum_{j=1}^N(\overline Z_t^j)^2\right]=\frac{1}{N}\BE[(\overline Z_t^1)^2]\,,
	\end{align*}
	where we have used the fact that $\BE[Z_t^jZ_t^k]=0$ for $j\neq k$.  It is also easy to check that
	\begin{equation}
	\BE[(\overline Z_t^1)^2]\leq 2\BE\left[|\OV_t^1 e^{-\alpha \TE( \OV_t^1 )}|^2\right]+2\left(\int_{\RR^d} v e^{-\alpha\TE(v)}d\rho_t\right)^2\leq 4C_{\Gamma}e^{-2\alpha\underline\TE}\,.
	\end{equation}
	Here we have used the fact that $\OV_t^1\in \Gamma$, so it holds $\BE\left[|\OV_t^1|^2\right]=\int_{\RR^d}|v|^2d\rho_t\leq C_{\Gamma}$ for some $C_{\Gamma}>0$ depending on $\Gamma$.
	Thus we conclude
	\begin{equation}
	\mathbb{E}\left[\left|\frac{1}{N}\sum_{j=1}^{N} \OV_t^j e^{-\alpha \TE( \OV_t^j )}-\int_{\RR^d} v e^{-\alpha\TE(v)}d\rho_t\right|^2\right]\leq \frac{1}{N}4C_{\Gamma}e^{-2\alpha\underline\TE}\,.
	\end{equation}
	By following the same argument it also holds that
	\begin{equation}
	\mathbb{E}\left[\left|\frac{1}{N}\sum_{j=1}^{N} e^{-\alpha \TE( \OV_t^j )}-\int_{\RR^d}e^{-\alpha\TE(v)}d\rho_t\right|^2\right]\leq \frac{1}{N}4C_{\Gamma}e^{-2\alpha\underline\TE}\,.
	\end{equation}
	Combining the above two estimates and \eqref{splites}, one has
	\begin{align}
	\mathbb{E}\left[|v_{\alpha,\TE}(\overline \rho_t^N)-v_{\alpha,\TE}(\rho_t)|^2\right]\leq\frac{1}{2}C_{\Gamma}e^{2\alpha(\overline{\TE}-\underline{\TE})}\frac{1}{N}+\frac{1}{2}C_{\Gamma}e^{\alpha(2\overline{\TE}-3\underline \TE)}\frac{1}{N}\leq C_{\Gamma}e^{3\alpha(\overline{\TE}-\underline \TE))}\frac{1}{N}=C_{\Gamma}C_{\alpha,\TE}^3\frac{1}{N}\,.
	\end{align}
	Thus we have completed the proof.
\end{proof}

Then we get the following mean-field limit result by the classical Sznitman's theory.
\begin{theorem}\label{thmmean}
	Under the  Assumptions \ref{asum1} and \ref{asum}, for any $T>0$, let $((V_t^i)_{t\in [0,T]})_{i \in [N]}$  and $((\OV_t^i)_{t\in [0,T]})_{i \in [N]}$  be respective solutions to  \eqref{stochastic Kuramoto-Vicsek} and \eqref{selfprocess} up to time $T$ with the same initial data  $V_0^i=\OV_0^i$  and the same Brownian motions $((B_t^i)_{t\in [0,T]})_{i \in [N]}$. Then there exists a constant $C>0$ depending only on $\Gamma$, $\lambda$, $\alpha$,  $\sigma$, $\|\nabla P_1\|_\infty$, $\|P_1\|_\infty$, $\|\nabla P_2\|_\infty$, $\|P_2\|_\infty$, $\|\nabla P_3\|_\infty$, $\|P_3\|_\infty$, $L$ and $C_{\alpha,\TE}$, such that
	\begin{equation}\label{MFlimit}
	\sup_{i=1,\cdots,N}\mathbb{E}[|V_t^i-\OV_t^i|^2]\leq CT\left(1+CTe^{CT}\right)\frac{1}{N}\,,
	\end{equation}
	holds for all $0\leq t\leq T$.
\end{theorem}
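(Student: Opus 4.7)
The plan is to use a synchronous coupling argument in the spirit of Sznitman: since the particles $V_t^i$ and the independent copies $\OV_t^i$ share initial data and driving Brownian motions, and since both processes remain on $\Gamma$ for all $t$, we may equivalently work with the regularized SDEs \eqref{RSKV} and \eqref{Rnonlinear}, whose coefficients are globally Lipschitz thanks to the boundedness of $\nabla P_1, P_1, \nabla P_2, P_2, \nabla P_3, P_3$ together with Lemma \ref{lemlocal} and Lemma \ref{lemsta}. This lets us freely apply It\^o's isometry on the full space $\RR^d$.

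First I would form $Z_t^i := V_t^i - \OV_t^i$ and write
\begin{align*}
Z_t^i &= -\lambda \int_0^t \bigl[P_1(V_s^i)(V_s^i - v_{\alpha,\TE}(\rho_s^N)) - P_1(\OV_s^i)(\OV_s^i - v_{\alpha,\TE}(\rho_s))\bigr]\,ds \\
&\quad + \sigma \int_0^t \bigl[|V_s^i - v_{\alpha,\TE}(\rho_s^N)|P_1(V_s^i) - |\OV_s^i - v_{\alpha,\TE}(\rho_s)|P_1(\OV_s^i)\bigr]\,dB_s^i \\
&\quad - \tfrac{\sigma^2}{2}\int_0^t \bigl[(V_s^i - v_{\alpha,\TE}(\rho_s^N))^2 P_2(V_s^i)P_3(V_s^i) - (\OV_s^i - v_{\alpha,\TE}(\rho_s))^2 P_2(\OV_s^i)P_3(\OV_s^i)\bigr]\,ds,
\end{align*}
then add and subtract terms so each bracket splits into (i) a part controlled by $|V_s^i - \OV_s^i|$ using global Lipschitzness of $P_1, P_2, P_3$ and boundedness of the trajectories on $\Gamma$, and (ii) a part controlled by $|v_{\alpha,\TE}(\rho_s^N) - v_{\alpha,\TE}(\rho_s)|$. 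Squaring, taking expectation, and applying It\^o's isometry to the stochastic integral together with Cauchy--Schwarz on the drift terms will yield
\begin{equation*}
\mathbb{E}\bigl[|Z_t^i|^2\bigr] \leq C\int_0^t \mathbb{E}\bigl[|Z_s^i|^2\bigr]\,ds + C\int_0^t \mathbb{E}\bigl[|v_{\alpha,\TE}(\rho_s^N) - v_{\alpha,\TE}(\rho_s)|^2\bigr]\,ds,
\end{equation*}
where $C$ depends on the quantities listed in the statement.

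The crucial step is then the decomposition
\begin{equation*}
v_{\alpha,\TE}(\rho_s^N) - v_{\alpha,\TE}(\rho_s) = \bigl[v_{\alpha,\TE}(\rho_s^N) - v_{\alpha,\TE}(\overline\rho_s^N)\bigr] + \bigl[v_{\alpha,\TE}(\overline\rho_s^N) - v_{\alpha,\TE}(\rho_s)\bigr].
\end{equation*}
For the first bracket, Lemma \ref{lemsta} gives a bound by $C W_2(\rho_s^N, \overline\rho_s^N)$, and since $\rho_s^N, \overline\rho_s^N$ are empirical measures built on the coupled particles one has $W_2^2(\rho_s^N, \overline\rho_s^N) \leq \tfrac{1}{N}\sum_{j=1}^N |V_s^j - \OV_s^j|^2$, whose expectation equals $\mathbb{E}[|Z_s^i|^2]$ by exchangeability of the couples $(V_s^j, \OV_s^j)$. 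For the second bracket, the large deviation estimate \eqref{estimazza} from Lemma \ref{lemLLN} provides a bound of order $1/N$. Inserting these into the previous inequality yields
\begin{equation*}
\mathbb{E}\bigl[|Z_t^i|^2\bigr] \leq C\int_0^t \mathbb{E}\bigl[|Z_s^i|^2\bigr]\,ds + \frac{CT}{N},
\end{equation*}
and a Gronwall argument closes the estimate in the form $\mathbb{E}[|Z_t^i|^2] \leq CT(1 + CTe^{CT})/N$, uniformly in $i$ by exchangeability.

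The main obstacle I expect is bookkeeping in step (i): the stochastic integrand is not jointly Lipschitz in $(v, v_\alpha)$ in an obvious way because of the product $|v - v_\alpha| P_1(v)$, so one must add and subtract $|V_s^i - v_{\alpha,\TE}(\rho_s)|P_1(\OV_s^i)$ (and similarly for the quadratic drift term $(\cdot)^2 P_2 P_3$) and exploit boundedness of $|V_s^i|, |\OV_s^i|$ on the compact hypersurface $\Gamma$ to convert absolute-value differences into $|Z_s^i| + |v_{\alpha,\TE}(\rho_s^N) - v_{\alpha,\TE}(\rho_s)|$ contributions with controlled constants. Once this is accomplished, the remainder of the argument is the standard coupling estimate plus Gronwall.
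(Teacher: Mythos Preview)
Your proposal is correct and follows essentially the same synchronous--coupling strategy as the paper: work with the regularized systems \eqref{RSKV} and \eqref{Rnonlinear} since both processes stay on $\Gamma$, estimate the squared difference by splitting each coefficient into a Lipschitz part and a $v_{\alpha,\TE}$--difference part, insert the intermediate empirical measure $\overline\rho_s^N$, invoke Lemma~\ref{lemLLN} for the $1/N$ fluctuation, and close by Gronwall. The only cosmetic differences are that the paper applies It\^o's formula directly to $|V_t^i-\OV_t^i|^2$ (rather than squaring the integral equation and using It\^o's isometry), uses Lemma~\ref{lemlocal} in place of your Lemma~\ref{lemsta}/Wasserstein bound for $|v_{\alpha,\TE}(\rho_s^N)-v_{\alpha,\TE}(\overline\rho_s^N)|$, and bounds $\tfrac{1}{N}\sum_j\mathbb{E}[|Z_s^j|^2]$ by $\sup_j\mathbb{E}[|Z_s^j|^2]$ rather than invoking exchangeability---none of which changes the argument.
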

\begin{remark}
	The estimate above guarantees the weak convergence of the empirical measure $\rho_t^N$ towards $\rho_t$, in the following sense
	\begin{equation}
	\sup_{t\in[0,T]}\BE\left[|\la\rho_t^N,\phi\ra-\la \rho_t,\phi\ra|^2\right]\to 0\mbox{ as }N\to \infty\quad \mbox{ for any test funtion }\phi\in \mc{C}_b^1(\RR^{d})\,.
	\end{equation} 
	Indeed, one has
	\begin{align*}
	&\BE\left[|\la\rho_t^N,\phi\ra-\la \rho_t,\phi\ra|^2\right]=\BE\left[\left|\frac{1}{N}\sum_{i=1}^{N}\phi(V_t^i)-\int_{\RR^{d}}\phi(v)d\rho_t(v)\right|^2\right]\\
	\leq& 2\BE\left[|\phi(V_t^1)-\phi(\OV_t^1)|^2\right]+2\BE\left[\left|\frac{1}{N}\sum_{i=1}^{N}\phi(\OV_t^i)-\int_{\RR^{d}}\phi(v)d\rho_t(v)\right|^2\right]\leq \frac{C}{N}\|\phi\|_{\mc{C}^1}^2
	\,.
	\end{align*} We refer to Refs. \cite{sznitman1991topics,bolley2011stochastic} for more details.
\end{remark}
\begin{proof}
	Notice that $((\OV_t^i)_{t\geq 0})_{i \in [N]}$  and $((V_t^i)_{t\geq 0})_{i \in [N]}$  are also solutions to the corresponding regularized systems \eqref{Rnonlinear} and \eqref{RSKV} respectively, and
	\begin{align}
	d(V_t^i-\OV_t^i)&=-\lambda\left(  P_1(V_t^i)(V_t^i-v_{\alpha,\TE}(\rho_t^N))-P_1(\overline V_t^i)(\OV_t^i-v_{\alpha,\TE}(\rho_t)) \right)dt \notag\\
	&+\sigma \left(|V_t^i - v_{\alpha,\TE}(\rho_t^N)| P_1(V_t^i)- |\overline V_t^i- v_{\alpha,\TE}(\rho_t) | P_1(\overline V_t^i)\right)dB_t^i\notag\\
	&-\frac{\sigma^2}{2}\left((V_t^i-v_{\alpha,\TE}(\rho_t^N))^2P_2(V_t^i)P_3(V_t^i)-(\overline V_t^i-v_{\alpha,\TE}(\rho_t) )^2P_2(\overline V_t^i)P_3(\overline V_t^i)\right)dt
	\,.
	\end{align}
	By applying It\^{o}'s formula one has
	\begin{align}\label{expec}
	d(V_t^i-\OV_t^i)^2&=-2 \lambda(V_t^i-\OV_t^i) \cdot\left(  P_1(V_t^i)(V_t^i-v_{\alpha,\TE}(\rho_t^N))-P_1(\overline V_t^i)(\OV_t^i-v_{\alpha,\TE}(\rho_t)) \right)dt \notag\\
	&+2 \sigma  (V_t^i-\OV_t^i) \cdot \left(|V_t^i - v_{\alpha,\TE}(\rho_t^N)| P_1(V_t^i)- |\overline V_t^i- v_{\alpha,\TE}(\rho_t) | P_1(\overline V_t^i)\right)dB_t^i\notag\\
	&-\sigma^2 (V_t^i-\OV_t^i)\cdot\left((V_t^i-v_{\alpha,\TE}(\rho_t^N))^2P_2(V_t^i)P_3(V_t^i)-(\overline V_t^i-v_{\alpha,\TE}(\rho_t) )^2P_2(\overline V_t^i)P_3(\overline V_t^i)\right)dt \notag\\
	&+\sigma^2\sum_{k=1}^d\left(|V_t^i - v_{\alpha,\TE}(\rho_t^N)| P_1^k(V_t^i)- |\overline V_t^i- v_{\alpha,\TE}(\rho_t) | P_1^k(\overline V_t^i)\right)\notag\\
	&\cdot \left(|V_t^i - v_{\alpha,\TE}(\rho_t^N)| P_1^k(V_t^i)- |\overline V_t^i- v_{\alpha,\TE}(\rho_t) | P_1^k(\overline V_t^i)\right)dt
	\,,
	\end{align}
	where $P_1^k(\cdot)$ represents the $k$-th row of the matrix $P_1(\cdot)$. Using Lemma \ref{lemlocal} it is easy to compute that
	\begin{align*}
	&\left|P_1(V_t^i)(V_t^i-v_{\alpha,\TE}(\rho_t^N))-P_1(\overline V_t^i)(\OV_t^i-v_{\alpha,\TE}(\rho_t)) \right|\\
	\leq&\left | P_1(V_t^i)v_{\alpha,\TE}(\rho_t^N)-P_1(\overline V_t^i)v_{\alpha,\TE}(\rho_t) \right |+(\|P_1\|_\infty+\|\nabla P_1\|_\infty)|V_t^i-\OV_t^i|\\
	\leq &\left |P_1(V_t^i)\left(v_{\alpha,\TE}(\rho_t^N)-v_{\alpha,\TE}(\overline \rho_t^N)\right)+v_{\alpha,\TE}(\overline \rho_t^N)\left(P_1(V_t^i)-P_1(\overline V_t^i)\right)+P_1(\overline V_t^i)\left(v_{\alpha,\TE}(\overline \rho_t^N)-v_{\alpha,\TE}(\rho_t)\right) \right |\\
	&+(\|P_1\|_\infty+\|\nabla P_1\|_\infty)|V_t^i-\OV_t^i|\\
	\leq&\|P_1\|_\infty \left(\frac{C_{\alpha,\TE}}{N}+\frac{2\alpha LC_{\alpha,\TE}}{N}\|\mb{\OV}_t^N\|_\infty\right)\|\mb{V}_t^N- \mb{\OV}_t^N\|_1+\|\nabla P_1\|_\infty\frac{C_{\alpha,\TE}\|\mb{\OV}_t^N\|_1}{N}|V_t^i-\OV_t^i|\\
	&+\|P_1\|_\infty |v_{\alpha,\TE}(\overline \rho_t^N)-v_{\alpha,\TE}(\rho_t)|+(\|P_1\|_\infty+\|\nabla P_1\|_\infty)|V_t^i-\OV_t^i| \\
	\leq & C(\alpha,\|P_1\|_\infty,L,C_{\alpha,\TE},C_{\Gamma})\frac{\sum_{i=1}^N|V_t^i-\OV_t^i|}{N}+C(C_{\alpha,\TE},C_{\Gamma},\|\nabla P_1\|_\infty,\|P_1\|_\infty)|V_t^i-\OV_t^i|+\|P_1\|_\infty |v_{\alpha,\TE}(\overline \rho_t^N)-v_{\alpha,\TE}(\rho_t)|\,,
	\end{align*}
	where we have used the fact that  $V_t^i,\OV_t^i\in\Gamma$ with $\Gamma$ being compact, so there exists some $C_\Gamma>0$ such that
	\begin{equation*}
	|V_t^i|,|\OV_t^i|\leq C_{\Gamma},\quad \frac{\|\mb{\OV}_t^N\|_1}{N}=\frac{\sum_{i=1}^N|\OV_t^i|}{N}\leq C_{\Gamma},\quad \|\mb{\OV}_t^N\|_\infty=\sup_{i=1,\cdots,N}|\OV_t^i|\leq C_{\Gamma}\,.
	\end{equation*}
	Hence it yields
	\begin{align}\label{ter1}
	&-2\lambda(V_t^i-\OV_t^i) \cdot \left(  P_1(V_t^i)(V_t^i-v_{\alpha,\TE}(\rho_t^N))-P_1(\overline V_t^i)(\OV_t^i-v_{\alpha,\TE}(\rho_t)) \right)\notag\\
	\leq & C \left ( \frac{\sum_{i=1}^N|V_t^i-\OV_t^i|^2}{N}+ |V_t^i-\OV_t^i|^2+  |v_{\alpha,\TE}(\overline \rho_t^N)-v_{\alpha,\TE}(\rho_t)|^2 \right)\,,
	\end{align}
	where we have used Cauchy's inequality and $C$ depends only on $\alpha,\|\nabla P_1\|_\infty, \|P_1\|_\infty,L,C_{\Gamma}$ and $C_{\alpha,\TE}$, and $\|P_1\|_\infty$ is the spectral norm of the matrix $P_1$.
	
	Similarly we compute
	{\small 	\begin{align*}
		&\left |(V_t^i-v_{\alpha,\TE}(\rho_t^N))^2P_2(V_t^i)P_3(V_t^i)-(\overline V_t^i-v_{\alpha,\TE}(\rho_t) )^2P_2(\overline V_t^i)P_3(\overline V_t^i)\right |\\
		=&\left | (V_t^i-v_{\alpha,\TE}(\rho_t^N))^2\left(P_2(V_t^i)P_3(V_t^i)-P_2(\OV_t^i)P_3(\OV_t^i)\right)
		+P_2(\OV_t^i)P_3(\OV_t^i)\left((V_t^i-v_{\alpha,\TE}(\rho_t^N))^2-(V_t^i-v_{\alpha, \TE}(\overline\rho_t^N))^2\right)\right . \\
		&+ \left . P_2(\OV_t^i)P_3(\OV_t^i)\left((V_t^i-v_{\alpha, \TE}(\overline\rho_t^N))^2-(\overline V_t^i-v_{\alpha,\TE}(\rho_t) )^2\right) \right |\\
		\leq &C|V_t^i-\OV_t^i|+C\frac{\sum_{i=1}^N|V_t^i-\OV_t^i|}{N}+C|v_{\alpha,\TE}(\overline \rho_t^N)-v_{\alpha,\TE}(\rho_t)|\,,
		\end{align*} }
	where we have used Lemma \ref{lemlocal} and the fact that $|V_t^i|,|\OV_t^i|\leq C_\Gamma$.
	Here $C$ depends only on $\alpha,\|\nabla P_2\|_\infty, \|P_2\|_\infty$, $\|\nabla P_3\|_\infty, \|P_3\|_\infty,L,C_{\Gamma}$ and $C_{\alpha,\TE}$.  This leads to
	\begin{align}\label{ter2}
	&-\sigma^2 (V_t^i-\OV_t^i)\cdot\left((V_t^i-v_{\alpha,\TE}(\rho_t^N))^2P_2(V_t^i)P_3(V_t^i)-(\overline V_t^i-v_{\alpha,\TE}(\rho_t) )^2P_2(\overline V_t^i)P_3(\overline V_t^i)\right)\notag\\
	\leq &C \left (\frac{\sum_{i=1}^N|V_t^i-\OV_t^i|^2}{N}+|V_t^i-\OV_t^i|^2+ |v_{\alpha,\TE}(\overline \rho_t^N)-v_{\alpha,\TE}(\rho_t)|^2 \right)\,,
	\end{align}
	where $C$ depends only on $\alpha, \lambda,\sigma,\|\nabla P_2\|_\infty, \|P_2\|_\infty,\|\nabla P_3\|_\infty, \|P_3\|_\infty,L,C_{\Gamma}$ and $C_{\alpha,\TE}$. 
	
	Now let us compute
	\begin{align*}
	&\left | |V_t^i - v_{\alpha,\TE}(\rho_t^N)| P_1^k(V_t^i)- |\overline V_t^i- v_{\alpha,\TE}(\rho_t) | P_1^k(\overline V_t^i) \right |\\
	=&\left | |V_t^i - v_{\alpha,\TE}(\rho_t^N)|\left(P_1^k(V_t^i)-P_1^k(\OV_t^i)\right)+P_1^k(\OV_t^i)\left(|V_t^i - v_{\alpha,\TE}(\rho_t^N)|-|V_t^i - v_{\alpha,\TE}(\overline \rho_t^N)|\right) \right .\\
	&\left . + P_1^k(\OV_t^i)\left(|V_t^i - v_{\alpha,\TE}(\overline \rho_t^N)|-|\overline V_t^i- v_{\alpha,\TE}(\rho_t) |\right)\right |\\
	\leq&C \left (|V_t^i-\OV_t^i|+\frac{\sum_{i=1}^N|V_t^i-\OV_t^i|}{N}+|v_{\alpha,\TE}(\overline \rho_t^N)-v_{\alpha,\TE}(\rho_t)| \right)\,,
	\end{align*}
	where $C$ depends only on $\alpha,\|\nabla P_1\|_\infty, \|P_1\|_\infty,L,C_{\Gamma}$ and $C_{\alpha,\TE}$. This implies that
	{\small \begin{align}\label{ter3}
		&\sigma^2\sum_{k=1}^d\left(|V_t^i - v_{\alpha,\TE}(\rho_t^N)| P_1^k(V_t^i)- |\overline V_t^i- v_{\alpha,\TE}(\rho_t) | P_1^k(\overline V_t^i)\right)\cdot \left(|V_t^i - v_{\alpha,\TE}(\rho_t^N)| P_1^k(V_t^i)- |\overline V_t^i- v_{\alpha,\TE}(\rho_t) | P_1^k(\overline V_t^i)\right)\notag\\
		\leq &C \left (\frac{\sum_{i=1}^N|V_t^i-\OV_t^i|^2}{N}+|V_t^i-\OV_t^i|^2+|v_{\alpha,\TE}(\overline \rho_t^N)-v_{\alpha,\TE}(\rho_t)|^2 \right)\,,
		\end{align} }	
	where $C$ depends only on $\alpha,\sigma,\|\nabla P_1\|_\infty, \|P_1\|_\infty,L,C_{\Gamma}$ and $C_{\alpha,\TE}$. 
	
	Taking expectation of both sides of \eqref{expec}, and collecting estimates \eqref{ter1}, \eqref{ter2} and \eqref{ter3}, one concludes
	\begin{align*}
	\mathbb{E}[|V_t^i-\OV_t^i|^2]&\leq\mathbb{E}[|V_0^i-\OV_0^i|^2]+C\int_0^t\frac{\sum_{i=1}^N\mathbb{E}[|V_s^i-\OV_s^i|^2]}{N}ds+C\int_0^t\mathbb{E}[|V_s^i-\OV_s^i|^2]ds\\
	&+C\int_0^t\mathbb{E}\left[|v_{\alpha,\TE}(\overline \rho_s^N)-v_{\alpha,\TE}(\rho_s)|^2\right] ds\\
	&\leq \mathbb{E}[|V_0^i-\OV_0^i|^2]+C\int_0^t\sup_{i=1,\cdots,N}\mathbb{E}[|V_s^i-\OV_s^i|^2]ds+CT\frac{1}{N}\,,
	\end{align*}
	where we have used  the fact that
	\begin{align*}
	\BE\left[(V_t^i-\OV_t^i) \cdot \left(|V_t^i - v_{\alpha,\TE}(\rho_t^N)| P_1(V_t^i)- |\overline V_t^i- v_{\alpha,\TE}(\rho_t) | P_1(\overline V_t^i)\right)dB_t^i\right]=0\,,
	\end{align*}
	and Lemma \ref{lemLLN} in the last inequality.
	Applying Gronwall's inequality with $\mathbb{E}[|V_0^i-\OV_0^i|^2]=0$, one concludes
	\begin{equation}
	\sup_{i=1,\cdots,N}\mathbb{E}[|V_t^i-\OV_t^i|^2]\leq C T\left(1+CTe^{CT}\right)\frac{1}{N}\,,
	\end{equation}
	for all $t\in[0,T]$, which completes the proof.
\end{proof}

\begin{remark}
	The constant $C>0$ appearing in the estimate \eqref{MFlimit} may depend on the dimension through the norm of $P_2$ or $\nabla P_2$, see \eqref{ter2}. Nevertheless we expect this dependency to scale
	at most linearly as $d-1$. In fact, for the case of  sphere $\Gamma=\mathbb S^{d-1}$ it is $P_2(v)=\Delta\gamma(v)= \frac{d-1}{|v|}$. We conclude that in general there is no curse of dimensionality involved in  the estimate \eqref{MFlimit}.
\end{remark}

\section{Conclusions}

We presented a new consensus-based model for global optimization on hypersurfaces, which is inspired by the kinetic Kolmogorov-Kuramoto-Vicsek equation on the sphere. The main results of this paper are the well-posedness of the stochastic particle system and its mean-field limit, which is obtained by the coupling method through introducing an auxiliary self-consistent nonlinear SDE. The presented mean-field limit is not affected by curse of dimension, i.e., the rate of convergence is of order $N^{-1}$ in the particle number $N$. This favorable theoretical rate is confirmed by numerical experiments in very high dimension ($d \approx 3000$) \cite{fhps20-2}.  In the related paper Ref. \cite{fhps20-2}  we analyze the large time behavior of the system for  $\Gamma = \mathbb S^{d-1}$, we prove its convergence to global minimizers in a suitable sense, and we provide several applications in machine learning.  We consider the results of this paper as a preliminary step towards the formulation of consensus-based optimization on compact manifolds \cite{emery1989stochastic}, which requires  a more general approach for the analysis of the global large time asymptotics.

\section{Appendix}
\begin{theorem}[Multidimensional It\^{o}'s formula]
	Let
	$$dX_t=\textbf{u}(t)dt+\textbf{v}(t)dB_t$$
	be an $d$-dimensional It\^{o} process, where $X_t\in \RR^{d}$, $\textbf{u}(t)\in\RR^d$, $\textbf{v}(t)\in\RR^{d\times d'}$, and $B_t$ is $d'$-dimensional Brownian motion. Assume $\varphi(x)$ be a $\mc{C}^2$ map from $\RR^d$ to $\RR$, then it holds that
	\begin{align}\label{Mulito}
	\varphi(X_t)&=\varphi(X_0)+\int_0^t\left(\nabla \varphi(X_s)\cdot \textbf{u}(s) +\frac{1}{2}\sum_{i,j=1}^{d}\frac{\partial^2\varphi}{\partial x_i\partial x_j}(X_s)v_i(s)v_j(s)^T\right)ds\notag\\
	&\quad+\int_0^t\nabla\varphi(X_s)\cdot \textbf{v}(s)dB_s
	\end{align}
	with $v_i(s)$ being the $i$-th row of the matrix $\textbf{v}(s)$.
\end{theorem}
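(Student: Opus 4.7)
The plan is to prove this by the standard route: reduce to the one-dimensional case via a Taylor expansion along a partition, then identify the limits of the resulting Riemann sums using the covariation structure of the Itô integral. First I would fix a partition $0=t_0<t_1<\dots<t_n=t$ of $[0,t]$ with mesh tending to zero and write the telescoping sum
\begin{equation*}
\varphi(X_t)-\varphi(X_0)=\sum_{k=0}^{n-1}\bigl[\varphi(X_{t_{k+1}})-\varphi(X_{t_k})\bigr].
\end{equation*}
Applying Taylor's theorem to $\varphi \in \mathcal{C}^2$ at each step gives
\begin{equation*}
\varphi(X_{t_{k+1}})-\varphi(X_{t_k})=\nabla\varphi(X_{t_k})\cdot\Delta X_k+\tfrac{1}{2}\sum_{i,j=1}^{d}\partial_{x_i x_j}^2\varphi(X_{t_k})\Delta X_k^i\Delta X_k^j+R_k,
\end{equation*}
with $\Delta X_k=X_{t_{k+1}}-X_{t_k}$ and a remainder $R_k=o(|\Delta X_k|^2)$.

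Next I would analyze the three pieces separately. For the first-order sum, substituting $\Delta X_k=\int_{t_k}^{t_{k+1}}\mathbf{u}(s)\,ds+\int_{t_k}^{t_{k+1}}\mathbf{v}(s)\,dB_s$ and invoking the definition of the Itô integral along with continuity of $s\mapsto\nabla\varphi(X_s)$ shows convergence (in probability, uniformly on compacts) to $\int_0^t\nabla\varphi(X_s)\cdot\mathbf{u}(s)\,ds+\int_0^t\nabla\varphi(X_s)\cdot\mathbf{v}(s)\,dB_s$. For the second-order sum, the key identity is the quadratic covariation
\begin{equation*}
\langle X^i,X^j\rangle_t=\int_0^t\bigl(\mathbf{v}(s)\mathbf{v}(s)^T\bigr)_{ij}\,ds=\int_0^t v_i(s)v_j(s)^T\,ds,
\end{equation*}
which follows from the It\^o isometry and the fact that $dB^\ell_s\,dB^m_s=\delta_{\ell m}\,ds$, $dB^\ell_s\,ds=0$, $ds\,ds=0$ in the symbolic calculus. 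Approximating $\partial_{x_ix_j}^2\varphi(X_s)$ by its left endpoint on each subinterval and using the standard $L^2$-convergence argument for discrete quadratic variations then yields $\sum_k\partial_{x_ix_j}^2\varphi(X_{t_k})\Delta X_k^i\Delta X_k^j\to\int_0^t\partial_{x_ix_j}^2\varphi(X_s)v_i(s)v_j(s)^T\,ds$.

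Finally, I would control the remainder term by noting that $|R_k|\le\omega(|\Delta X_k|)|\Delta X_k|^2$ where $\omega$ is the modulus of continuity of $\nabla^2\varphi$ on a bounded set, together with the uniform bound $\mathbb{E}\sum_k|\Delta X_k|^2\le C$ following from It\^o isometry and boundedness of $\mathbf{u},\mathbf{v}$; a Cauchy-Schwarz bound then gives $\sum_k R_k\to 0$. The main obstacle is the standard one: ensuring the integrability hypotheses on $\mathbf{u}$, $\mathbf{v}$, $\varphi$ and its derivatives are sufficient to justify the limiting arguments. This is handled by a localization argument via stopping times $\tau_M=\inf\{t\ge 0:|X_t|\ge M\}$ applied to a compactly supported smooth cutoff $\varphi_M$ agreeing with $\varphi$ on $\{|x|\le M\}$; one proves the formula for the stopped process, then sends $M\to\infty$ using continuity of paths so that $\tau_M\to\infty$ almost surely. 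The verification that cross-terms $\Delta X_k^i\Delta X_k^j$ coming from the $\mathbf{u}\,dt$ and drift-Brownian mixed contributions vanish in the limit, while the pure $\int\mathbf{v}\,dB$ contributions yield the covariation, is the technical heart of the argument and is where the specifically stochastic (as opposed to deterministic) character of the formula enters.
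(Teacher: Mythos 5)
The paper states this multidimensional It\^{o} formula in its appendix as a classical reference result and gives no proof of its own, so there is nothing to compare against line by line. Your outline is the standard and correct argument: telescoping over a partition, second-order Taylor expansion, identification of the first-order sums with the drift and stochastic integrals, identification of the second-order sums with the quadratic covariation $\langle X^i,X^j\rangle_t=\int_0^t (\mathbf{v}\mathbf{v}^T)_{ij}\,ds$ (correctly matching the paper's notation $v_i(s)v_j(s)^T$ for rows of $\mathbf{v}$), control of the remainder via the modulus of continuity of $\nabla^2\varphi$, and removal of the integrability assumptions by localization with stopping times. No gaps.
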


\section*{Acknowledgment}

Massimo Fornasier and Hui Huang acknowledge the support of the DFG Project "Identification of Energies from Observation of Evolutions" and the DFG SPP 1962 "Non-smooth and Complementarity-based Distributed Parameter Systems: Simulation and Hierarchical Optimization".     The present project and Philippe  S\"{u}nnen  are supported by the National Research Fund, Luxembourg (AFR PhD Project Idea ``Mathematical Analysis of Training Neural Networks'' 12434809). 
Lorenzo Pareschi acknowledges the support of the John Von Neumann guest Professorship program of the Technical University of Munich and the PRIN Project 2017KKJP4X "Innovative numerical methods for evolutionary partial differential equations and applications".

\bibliographystyle{plain}
\bibliography{bibfile}

\begin{thebibliography}{10}

\bibitem{Aarts:1989:SAB:61990}
Emile Aarts and Jan Korst.
\newblock {\em Simulated Annealing and Boltzmann Machines: A Stochastic
  Approach to Combinatorial Optimization and Neural Computing}.
\newblock John Wiley \& Sons, Inc., New York, NY, USA, 1989.

\bibitem{albiandco2019survey}
G.~Albi, N.~Bellomo, L.~Fermo, S.-Y. Ha, J.~Kim, L.~Pareschi, D.~Poyato, and
  J.~Soler.
\newblock Vehicular traffic, crowds, and swarms: From kinetic theory and
  multiscale methods to applications and research perspectives.
\newblock {\em Mathematical Models and Methods in Applied Sciences},
  29(10):1901--2005, 2019.

\bibitem{albiandco2017opinion}
Giacomo Albi, Lorenzo Pareschi, Giuseppe Toscani, and Mattia Zanella.
\newblock {\em Recent advances in opinion modeling: control and social
  influence}, pages 49--98.
\newblock Springer, 2017.

\bibitem{alhu03}
Maximino Aldana and Cristian Huepe.
\newblock Phase transitions in self-driven many-particle systems and related
  non-equilibrium models: A network approach.
\newblock {\em Journal of Statistical Physics}, 112:135--153, 07 2003.

\bibitem{ambrosio2008gradient}
Luigi Ambrosio, Nicola Gigli, and Giuseppe Savar{\'e}.
\newblock {\em Gradient Flows: In Metric Spaces and in the Space of Probability
  Measures}.
\newblock Springer Science \& Business Media, 2008.

\bibitem{Back:1997:HEC:548530}
Thomas Back, David~B. Fogel, and Zbigniew Michalewicz, editors.
\newblock {\em Handbook of Evolutionary Computation}.
\newblock IOP Publishing Ltd., Bristol, UK, UK, 1st edition, 1997.

\bibitem{bellomo2011modeling}
Nicola Bellomo and Christian Dogbe.
\newblock On the modeling of traffic and crowds: A survey of models,
  speculations, and perspectives.
\newblock {\em SIAM review}, 53(3):409--463, 2011.

\bibitem{bellomo2012modeling}
Nicola Bellomo, Benedetto Piccoli, and Andrea Tosin.
\newblock Modeling crowd dynamics from a complex system viewpoint.
\newblock {\em Mathematical Models and Methods in Applied Sciences},
  22(supp02):1230004, 2012.

\bibitem{bengio1994learning}
Yoshua Bengio, Patrice Simard, Paolo Frasconi, et~al.
\newblock Learning long-term dependencies with gradient descent is difficult.
\newblock {\em IEEE Transactions on Neural Networks}, 5(2):157--166, 1994.

\bibitem{Blum:2003:MCO:937503.937505}
Christian Blum and Andrea Roli.
\newblock Metaheuristics in combinatorial optimization: Overview and conceptual
  comparison.
\newblock {\em ACM computing surveys (CSUR)}, 35(3):268--308, 2003.

\bibitem{bolley2011stochastic}
Fran{\c{c}}ois Bolley, Jos{\'e}~A Canizo, and Jos{\'e}~A Carrillo.
\newblock Stochastic mean-field limit: {non-Lipschitz} forces and swarming.
\newblock {\em Mathematical Models and Methods in Applied Sciences},
  21(11):2179--2210, 2011.

\bibitem{bolley2012mean}
Fran{\c{c}}ois Bolley, Jos{\'e}~A Ca{\~n}izo, and Jos{\'e}~A Carrillo.
\newblock Mean-field limit for the stochastic vicsek model.
\newblock {\em Applied Mathematics Letters}, 25(3):339--343, 2012.

\bibitem{carrillo2018analytical}
Jos{\'e}~A Carrillo, Young-Pil Choi, Claudia Totzeck, and Oliver Tse.
\newblock An analytical framework for consensus-based global optimization
  method.
\newblock {\em Mathematical Models and Methods in Applied Sciences},
  28(06):1037--1066, 2018.

\bibitem{carrillo2010asymptotic}
Jos{\'e}~A Carrillo, Massimo Fornasier, Jes{\'u}s Rosado, and Giuseppe Toscani.
\newblock Asymptotic flocking dynamics for the kinetic {Cucker--Smale} model.
\newblock {\em SIAM Journal on Mathematical Analysis}, 42(1):218--236, 2010.

\bibitem{carrillo2019consensus}
Jos{\'e}~A Carrillo, Shi Jin, Lei Li, and Yuhua Zhu.
\newblock A consensus-based global optimization method for high dimensional
  machine learning problems.
\newblock {\em ESAIM: Control, Optimisation and Calculus of Variations}, 2020.

\bibitem{Chen2019}
Yuxin Chen, Yuejie Chi, Jianqing Fan, and Cong Ma.
\newblock Gradient descent with random initialization: fast global convergence
  for nonconvex phase retrieval.
\newblock {\em Mathematical Programming}, 176(1):5--37, 2019.

\bibitem{coetal02}
ID~Couzin, J~Krause, R~James, GD~Ruxton, and NR~Franks.
\newblock Collective memory and spatial sorting in animal groups.
\newblock {\em Journal of Theoretical Biology}, 218:1 -- 11, 2002.

\bibitem{cucker2007emergent}
Felipe Cucker and Steve Smale.
\newblock Emergent behavior in flocks.
\newblock {\em IEEE Transactions on automatic control}, 52(5):852--862, 2007.

\bibitem{defrli13}
Pierre Degond, Amic Frouvelle, and Jian-Guo Liu.
\newblock Phase transitions, hysteresis, and hyperbolicity for self-organized
  alignment dynamics.
\newblock {\em Archive for Rational Mechanics and Analysis}, 216(1):63--115,
  2015.

\bibitem{demo07}
Pierre Degond and S{\'e}bastien Motsch.
\newblock Continuum limit of self-driven particles with orientation
  interaction.
\newblock {\em Mathematical Models and Methods in Applied Sciences},
  18(supp01):1193--1215, 2008.

\bibitem{Dembo2010}
Amir Dembo and Ofer Zeitouni.
\newblock {\em Large Deviations Techniques and Applications}.
\newblock Springer-Verlag Berlin Heidelberg, 2010.

\bibitem{demlow2007adaptive}
Alan Demlow and Gerhard Dziuk.
\newblock An adaptive finite element method for the {Laplace-Beltrami} operator
  on implicitly defined surfaces.
\newblock {\em SIAM Journal on Numerical Analysis}, 45(1):421--442, 2007.

\bibitem{dorigo2005ant}
Marco Dorigo and Christian Blum.
\newblock Ant colony optimization theory: A survey.
\newblock {\em Theoretical computer science}, 344(2-3):243--278, 2005.

\bibitem{durrett2018stochastic}
Richard Durrett.
\newblock {\em Stochastic calculus: a practical introduction}.
\newblock CRC press, 2018.

\bibitem{emery1989stochastic}
M.~{\'E}mery and P.A. Meyer.
\newblock {\em Stochastic calculus in manifolds}.
\newblock Universitext (1979). Springer-Verlag, 1989.

\bibitem{fetecau2019propagation}
Razvan~C Fetecau, Hui Huang, and Weiran Sun.
\newblock Propagation of chaos for the {Keller--Segel} equation over bounded
  domains.
\newblock {\em Journal of Differential Equations}, 266(4):2142--2174, 2019.

\bibitem{fikamo18}
Alessio Figalli, Moon-Jin Kang, and Javier Morales.
\newblock Global well-posedness of the spatially homogeneous
  {Kolmogorov--Vicsek} model as a gradient flow.
\newblock {\em Archive for Rational Mechanics and Analysis}, 227(3):869--896,
  2018.

\bibitem{Fogel:2006:ECT:1202305}
David~B. Fogel.
\newblock {\em Evolutionary Computation: Toward a New Philosophy of Machine
  Intelligence (IEEE Press Series on Computational Intelligence)}.
\newblock Wiley-IEEE Press, 2006.

\bibitem{fhps20-2}
Massimo Fornasier, Hui Huang, Lorenzo Pareschi, and Philippe S\"{u}nnen.
\newblock Consensus-based optimization on the sphere: Convergence to global
  mininizers and machine learning.
\newblock {\em arXiv:2001.11988}, 2020.

\bibitem{gaka15}
Irene~M Gamba and Moon-Jin Kang.
\newblock Global weak solutions for {Kolmogorov--Vicsek} type equations with
  orientational interactions.
\newblock {\em Archive for Rational Mechanics and Analysis}, 222(1):317--342,
  2016.

\bibitem{Gendreau:2010:HM:1941310}
Michel Gendreau and Jean-Yves Potvin.
\newblock {\em Handbook of Metaheuristics}.
\newblock Springer Publishing Company, Incorporated, 2nd edition, 2010.

\bibitem{gilbarg2015elliptic}
David Gilbarg and Neil~S Trudinger.
\newblock {\em Elliptic partial differential equations of second order}.
\newblock springer, 2015.

\bibitem{ha2019convergence}
Seung-Yeal Ha, Shi Jin, and Doheon Kim.
\newblock Convergence of a first-order consensus-based global optimization
  algorithm.
\newblock {\em Mathematical Models and Methods in Applied Sciences}, 2020.

\bibitem{hastings70}
W.~K. Hastings.
\newblock Monte {C}arlo sampling methods using {M}arkov chains and their
  applications.
\newblock {\em Biometrika}, 57(1):97--109, 1970.

\bibitem{helbing2010quantitative}
Dirk Helbing.
\newblock {\em Quantitative Sociodynamics: Stochastic Methods and Models of
  Social Interaction Processes}.
\newblock Springer Science \& Business Media, 2010.

\bibitem{Holland:1992:ANA:531075}
John~H. Holland.
\newblock {\em Adaptation in Natural and Artificial Systems: An Introductory
  Analysis with Applications to Biology, Control and Artificial Intelligence}.
\newblock MIT Press, Cambridge, MA, USA, 1992.

\bibitem{holley1988simulated}
Richard Holley and Daniel Stroock.
\newblock Simulated annealing via {Sobolev} inequalities.
\newblock {\em Communications in Mathematical Physics}, 115(4):553--569, 1988.

\bibitem{holm2006formation}
Darryl~D Holm and Vakhtang Putkaradze.
\newblock Formation of clumps and patches in self-aggregation of finite-size
  particles.
\newblock {\em Physica D: Nonlinear Phenomena}, 220(2):183--196, 2006.

\bibitem{huang2017error}
Hui Huang and Jian-Guo Liu.
\newblock Error estimate of a random particle blob method for the
  {Keller--Segel} equation.
\newblock {\em Mathematics of Computation}, 86(308):2719--2744, 2017.

\bibitem{huang2019learning}
Hui Huang, Jian-Guo Liu, and Jianfeng Lu.
\newblock Learning interacting particle systems: Diffusion parameter estimation
  for aggregation equations.
\newblock {\em Mathematical Models and Methods in Applied Sciences},
  29(01):1--29, 2019.

\bibitem{kennedy2010particle}
James Kennedy.
\newblock Particle swarm optimization.
\newblock {\em Encyclopedia of machine learning}, pages 760--766, 2010.

\bibitem{kirkpatrick1983optimization}
Scott Kirkpatrick, C~Daniel Gelatt, and Mario~P Vecchi.
\newblock Optimization by simulated annealing.
\newblock {\em Science}, 220(4598):671--680, 1983.

\bibitem{kuramoto1975self}
Yoshiki Kuramoto.
\newblock Self-entrainment of a population of coupled non-linear oscillators.
\newblock In {\em International symposium on mathematical problems in
  theoretical physics}, pages 420--422. Springer, 1975.

\bibitem{recht19}
Jason~D. Lee, Ioannis Panageas, Georgios Piliouras, Max Simchowitz, Michael~I.
  Jordan, and Benjamin Recht.
\newblock First-order methods almost always avoid strict saddle points.
\newblock {\em Mathematical Programming}, 176(1):311--337, 2019.

\bibitem{liu2019bad}
Shengchao Liu, Dimitris Papailiopoulos, and Dimitris Achlioptas.
\newblock Bad global minima exist and sgd can reach them.
\newblock {\em arXiv:1906.02613}, 2019.

\bibitem{miller2006applied}
Peter~David Miller.
\newblock {\em Applied Asymptotic Analysis}, volume~75.
\newblock American Mathematical Soc., 2006.

\bibitem{motsch2014heterophilious}
Sebastien Motsch and Eitan Tadmor.
\newblock Heterophilious dynamics enhances consensus.
\newblock {\em SIAM review}, 56(4):577--621, 2014.

\bibitem{NeldMead65}
John~A. Nelder and Roger Mead.
\newblock A simplex method for function minimization.
\newblock {\em Computer Journal}, 7:308--313, 1965.

\bibitem{pinnau2017consensus}
Ren{\'e} Pinnau, Claudia Totzeck, Oliver Tse, and Stephan Martin.
\newblock A consensus-based model for global optimization and its mean-field
  limit.
\newblock {\em Mathematical Models and Methods in Applied Sciences},
  27(01):183--204, 2017.

\bibitem{poli2007particle}
Riccardo Poli, James Kennedy, and Tim Blackwell.
\newblock Particle swarm optimization.
\newblock {\em Swarm intelligence}, 1(1):33--57, 2007.

\bibitem{rastrigin63}
L.~A. Rastrigin.
\newblock The convergence of the random search method in the external control
  of many-parameter system.
\newblock {\em Automation and Remote Control}, 24:1337--1342, 1963.

\bibitem{sznitman1991topics}
Alain-Sol Sznitman.
\newblock Topics in propagation of chaos.
\newblock In {\em Ecole d'\'{e}t\'{e} de probabilit\'{e}s de {Saint-Flour}
  {XIX}—1989}, pages 165--251. Springer, 1991.

\bibitem{Vicsek1995NovelTO}
Tam{\'a}s Vicsek, Andr{\'a}s Czir{\'o}k, Eshel Ben-Jacob, Inon Cohen, and Ofer
  Shochet.
\newblock Novel type of phase transition in a system of self-driven particles.
\newblock {\em Physical review letters}, 75(6):1226, 1995.

\end{thebibliography}

\end{document}